\theoremstyle{plain}
\newtheorem{theorem}{\bf Theorem}[section]
\newtheorem{lemma}[theorem]{\bf Lemma}
\newtheorem{corollary}[theorem]{\bf Corollary}
\newtheorem{conjecture}[theorem]{\bf Conjecture}
\theoremstyle{definition}
\newtheorem{remark}[theorem]{\bf Remark}
\newcommand{\N}{\mathbb N}
\newcommand{\Z}{\mathbb Z}
\newcommand{\R}{\mathbb R}
\DeclareMathOperator{\ord}{ord}
\DeclareMathOperator{\supp}{supp}
\numberwithin{equation}{section}
\begin{document}
\title{On a zero-sum problem arising from factorization theory}

\address{Institut f\"ur Mathematik und wissenschaftliches Rechnen, Karl-Franzens-Universit\"at Graz, NAWI Graz, Heinrichstra{\ss}e 36, 8010 Graz, Austria}
\email{aqsa.bashir@uni-graz.at,alfred.geroldinger@uni-graz.at,qinghai.zhong@uni-graz.at}
\urladdr{https://imsc.uni-graz.at/geroldinger, https://imsc.uni-graz.at/zhong/}
\author{Aqsa Bashir and Alfred Geroldinger and Qinghai Zhong}

\thanks{This work was supported by the Austrian Science Fund FWF, Project Numbers  W1230 and P33499-N}
\keywords{zero-sum sequences, sets of lengths, elasticity, transfer Krull monoids}
\subjclass[2020]{11B75, 11P70, 13A05, 20M13}

\begin{abstract}
We study a zero-sum problem dealing with minimal zero-sum sequences of maximal length over finite abelian groups. A positive answer to this problem yields a structural description of sets of lengths with maximal elasticity in transfer Krull monoids over finite abelian groups.
\end{abstract}

\maketitle

\section{Introduction}\label{1}

Let $G$ be an additively written, finite abelian group and $G_0 \subset G$ be a subset. By a sequence $S = g_1 \ldots g_{\ell}$ over $G_0$, we mean a finite sequence of terms from $G_0$, where the order is disregarded and repetition is allowed. We say that $S$ has sum zero if $g_1 + \ldots + g_{\ell}=0$ and that $S$ is a minimal zero-sum sequence if no proper subsum equals zero (i.e., $\sum_{i \in I} g_i \ne 0$ for all $\emptyset \ne I \subsetneq [1, \ell]$). The set of all zero-sum sequences is a (multiplicative) monoid with concatenation of sequences as operation. The empty sequence is the identity element of this monoid and the minimal zero-sum sequences are the irreducible elements.
The Davenport constant $\mathsf D (G)$ of $G$ is the maximal length of a minimal zero-sum sequence over $G$ (equivalently, $\mathsf D (G)$ is the smallest integer $\ell \in \N$ such that every sequence over $G$ of length at least $\ell$ has  a non-empty zero-sum subsequence).

In this note we study a conjecture stemming from factorization theory. We first formulate it in basic terms. Its background and significance will be discussed in Section \ref{2}, when we have more terminology at our disposal (Theorem \ref{2.2} and Corollary \ref{2.3}).

\smallskip
\noindent
\begin{conjecture} \label{1.1}
 Let $G$ be a finite abelian group, which is neither cyclic nor an elementary $2$-group.  Then, for every minimal zero-sum sequence $U = g_1 \ldots g_{\ell}$ of length $|U| = \ell = \mathsf D (G)$,  there are $k \in \N$ and minimal zero-sum sequences $U_1, \ldots, U_k, V_1, \ldots, V_{k+1}$ with terms from $\{g_1, \ldots, g_{\ell}, -g_1, \ldots, - g_{\ell} \}$ such that $U_1 \ldots U_k = V_1 \ldots V_{k+1}$.
\end{conjecture}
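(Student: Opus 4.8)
The plan is to reformulate the statement in the language of sets of lengths and then to reduce it to the construction of a single short relation. Write $T=\supp(U)\cup\supp(-U)=\{g_1,\dots,g_\ell,-g_1,\dots,-g_\ell\}$, a symmetric subset of $G$, let $\mathcal{B}(T)$ be the monoid of zero-sum sequences over $T$, and let $\mathsf{L}(B)$ denote the set of factorization lengths of $B\in\mathcal{B}(T)$. Producing atoms $U_1,\dots,U_k,V_1,\dots,V_{k+1}$ with $U_1\cdots U_k=V_1\cdots V_{k+1}$ is exactly producing a single $B\in\mathcal{B}(T)$ with $\{k,k+1\}\subseteq\mathsf{L}(B)$, i.e.\ two factorization lengths differing by $1$; equivalently $1\in\Delta(\mathcal{B}(T))$. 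So the whole problem becomes: exhibit an element over $T$ admitting factorizations into $k$ and into $k+1$ atoms.

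First I would record the obvious source of distinct lengths. Since $-g_i\ne 0$ for all $i$, each $g_i(-g_i)$ is an atom, so $B_0:=U\cdot(-U)$ satisfies $\{2,\ell\}\subseteq\mathsf{L}(B_0)$: the factorization $U\cdot(-U)$ has length $2$, while $\prod_{i=1}^{\ell} g_i(-g_i)$ has length $\ell=\mathsf{D}(G)$. This already exhibits the maximal elasticity $\mathsf{D}(G)/2$, and it also clarifies the role of the two excluded cases: if $G$ is cyclic one may take $U=g^{\mathsf{D}(G)}$ and $T=\{g,-g\}$, while if $G$ is an elementary $2$-group then $-U=U$; in both situations $\mathsf{L}(B_0)=\{2,\ell\}$ and one checks there is no element of $\mathcal{B}(T)$ with two consecutive lengths, so the conclusion genuinely fails. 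The hypotheses must therefore be invoked precisely to manufacture an intermediate length.

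The key reduction I would use is that \emph{it suffices to exhibit an atom of length $3$ over $T$}. Indeed, if $a,b,c\in T$ with $a+b+c=0$, then no term and no two-term subsum can vanish (a vanishing pair would force the remaining term to be $0$), so $A=abc$ is automatically an atom; its mirror $-A=(-a)(-b)(-c)$ is an atom as well, since $T=-T$, and
\[A\cdot(-A)=\big(abc\big)\big((-a)(-b)(-c)\big)=\big(a(-a)\big)\big(b(-b)\big)\big(c(-c)\big)\]
displays the same element of $\mathcal{B}(T)$ as a product of $2$ and of $3$ atoms, giving $\{2,3\}\subseteq\mathsf{L}(A(-A))$, the desired relation with $k=2$. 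Thus everything comes down to finding a ternary zero-sum $\varepsilon_i g_i+\varepsilon_j g_j+\varepsilon_k g_k=0$ with signs $\varepsilon\in\{\pm1\}$ and indices in $[1,\ell]$ (neither the indices nor the signs need be distinct). When $\supp(U)$ contains an element $g$ of order $3$ this is immediate from $g^3$, and when some term is a signed sum of two others, say $g_k=g_i+g_j$, the atom $g_ig_j(-g_k)$ works; so the real task is to \emph{force} such a short relation inside the small symmetric set $T$.

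The main obstacle is exactly guaranteeing this ternary relation for every admissible $G$. Here I would exploit the maximality of $U$: since $|U|=\mathsf{D}(G)$, appending any $h\in T$ yields a sequence of length $>\mathsf{D}(G)$ and hence a further nonempty zero-sum subsequence, which produces atoms of the shape $(-g_1)\prod_{i\in J}g_i$ with $\sum_{i\in J}g_i=g_1$; the plan is to combine this with non-cyclicity (rank $\ge 2$, whence $\supp(U)$ has at least two distinct elements and $\exp(G)<\mathsf{D}(G)$) and with the presence of elements of order $>2$ to pin down three terms of $T$ summing to zero. I expect this to force a genuine case distinction according to the structure of $G$ (its rank, exponent, and $p$-components), and I do not anticipate a single uniform argument covering all groups — which is why the statement is posed as a conjecture. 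If the ternary relation should fail for some $G$, the fallback is to replace the length-$3$ atom by a longer configuration still yielding two consecutive lengths; controlling such configurations is considerably harder and is where the core difficulty of the problem resides.
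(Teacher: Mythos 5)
Your reformulation is exactly the one the paper works with: producing atoms over $T=\supp\big((-U)U\big)$ with $U_1\cdots U_k=V_1\cdots V_{k+1}$ is equivalent to $1\in\Delta\big(\supp((-U)U)\big)$, i.e.\ to $\min\Delta\big(\supp((-U)U)\big)=1$, which is precisely the paper's statement \eqref{basic}; your explanation of why cyclic groups and elementary $2$-groups must be excluded also agrees with the discussion following Conjecture \ref{1.1}. But your proposal is not a proof, and you say so yourself: all of the content is concentrated in the step you defer, namely forcing a suitable relation among the terms of $T$ for \emph{every} minimal zero-sum sequence of maximal length. Bear in mind that the statement is a conjecture which the paper itself does not prove in general --- it establishes only the classes of groups listed in Theorem \ref{1.2} --- so the realistic target for a blind attempt was to recover some of those special cases, and your proposal does not reach any of them.

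The concrete flaw, beyond the missing core argument, is that your reduction is too rigid. You insist on exhibiting a single atom $A=abc$ of length $3$ over $T$, giving $\{2,3\}\subset\mathsf L\big(A(-A)\big)$. That is a correct sufficient condition, but it is strictly stronger than needed, and there is no reason such a signed ternary relation exists for an arbitrary $U$ in the groups of Theorem \ref{1.2}. Since $\min\Delta=\gcd\Delta$, one never has to display an element with two consecutive lengths: writing $d=\min\Delta\big(\supp((-U)U)\big)$, every atom $V$ over $T$ forces $d\mid |V|-2$ (compare the factorizations of $(-V)V$), so it suffices to produce atoms over $T$ whose lengths minus $2$ are coprime. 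This is how the paper proceeds: for $p$-groups with $\gcd(\exp(G)-2,\mathsf D(G)-2)=1$ it combines $d\mid\exp(G)-2$ (from $g^{\ord(g)}$ for an element of maximal order in $\supp(U)$) with $d\mid\mathsf D(G)-2$ (from $U$ itself); in other cases it manufactures pairs of atoms such as $h^{p^{s_1}}g_1^{p^{s_2}-k_1}$ and $(-h)^{p^{s_1}}g_1^{k_1}$ to get $d\mid p^{s_1}-1$, or runs parity arguments comparing factorizations of $W^p$ and $V^p$ for carefully built atoms $W,V$, together with the key Lemma \ref{3.1} (\cite[Lemma 3.10]{Ge-Zh18a}) which converts structural information about $\supp(U)$ (independent tuples plus one dependent element) into $d=1$. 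None of these arguments produces, or needs, a length-$3$ atom. To make your outline workable you would have to replace the search for one short relation by this divisor-theoretic bookkeeping on $d$, which is where the actual work of the paper lies.
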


\smallskip
Let $G$ be a cyclic group of order $|G|=n \ge 3$. Then $\mathsf D (G)=n$ and every minimal zero-sum sequence over $G$ of length $n$ consists of an element $g$ of order $n$ repeated $n$ times. Thus all distances $s-r$, occurring in equations $U_1 \ldots U_r = V_1 \ldots V_s$ over minimal zero-sum sequences with terms from $\{-g, g\}$, is a multiple of $n-2$. Similarly, if $G$ is an elementary $2$-group of rank $r \ge 2$, then $\mathsf D (G)=r+1$ and all distances $s-r$ are multiplies of $r-1$. Thus,  the above conjecture neither holds for cyclic groups  nor for elementary $2$-groups with Davenport constant greater than or equal to four.

\smallskip
To describe the challenge of the above conjecture, suppose that $G \cong C_{n_1} \oplus \ldots \oplus C_{n_r}$, where $r = \mathsf r (G) = \max \{\mathsf r_p (G) \colon p \in \mathbb P \}$ is the rank of $G$, $\mathsf r_p (G)$ is the $p$-rank of $G$ for every prime $p$,  and $1 < n_1 \mid \ldots \mid n_r$ are positive integers. Then
\begin{equation} \label{davenport}
\mathsf D^* (G) := 1 + \sum_{i=1}^r (n_i-1) \le \mathsf D (G) \,.
\end{equation}
It is known since the 1960s that equality holds for $p$-groups and for groups of rank $\mathsf r (G) \le 2$. There are further sparse series of groups where equality holds and groups where equality does not hold (see \cite{Bh-SP07a, Gi18a, Gi-Sc19a, Li20a} for recent progress). Even less is known for the associated inverse question asking for the structure of minimal zero-sum sequences of length $\mathsf D (G)$. However,  the full structural description of minimal zero-sum sequences of length $\mathsf D (G)$ is not always needed in order to settle the Conjecture \ref{1.1}. We summarize what is known so far.

\smallskip
Conjecture \ref{1.1} is settled for groups of rank two and for groups isomorphic to $C_2 \oplus C_2 \oplus C_{2n}$ with $n \ge 2$. These proofs heavily depend on the complete structural description of minimal zero-sum sequences of length $\mathsf D (G)$. Furthermore, the conjecture is proved for
groups isomorphic to $C_{p^k}^r$, where $p$ is a prime and $k, r \in \N$ such that $p^k > 2$, although for these groups there is not even a conjecture concerning the structure of minimal zero-sum sequences of maximal length (for all these results see \cite{Ge-Zh18a}). We formulate a main result of the present paper.

\begin{theorem} \label{1.2}
Conjecture \ref{1.1} holds true for the following non-cyclic finite abelian groups $G$.
\begin{itemize}
\item[(a)]  $G$ is a $p$-group such that $\gcd (\exp (G)-2, \mathsf D (G)-2)=1$.

\item[(b)]  $G\cong C_{p^{s_1}}^{r_1}\oplus C_{p^{s_2}}^{r_2}$, where $p$ is a prime and $r_1,r_2, s_1, s_2\in \N$ such that $s_1$ divides $s_2$.

\item[(c)] $G$ is a  group with exponent $\exp (G) = pq$, where $p, q$ are distinct primes satisfying one of the three properties.
           \begin{itemize}
           \item[(i)] $\gcd ( pq-2, \mathsf D (G)-2 ) = 1$.
           \item[(ii)] $\gcd(pq - 2, p+q-3)=1$.
           \item[(iii)] $q=2$ and $p-1$ is a power of $2$.
           \item[(iv)] $q=2$ and $\mathsf r_p (G) = 1$.
           \end{itemize}

\item[(d)]  $G$ is a  group with exponent $\exp (G) \in [3,11]\setminus \{8\}$.
\end{itemize}
\end{theorem}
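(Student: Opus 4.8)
The plan is to rephrase Conjecture \ref{1.1} for a fixed $U$ as a statement about the set of distances $\Delta(G_0)$ of the monoid $\mathcal{B}(G_0)$ of zero-sum sequences over $G_0 := \{g_1, \ldots, g_\ell, -g_1, \ldots, -g_\ell\} = \supp\big(U(-U)\big)$, where $\Delta(G_0)$ denotes the set of all positive differences of successive factorization lengths, taken over all elements. An equation $U_1 \cdots U_k = V_1 \cdots V_{k+1}$ of the required form is precisely a zero-sum sequence over $G_0$ admitting two factorization lengths that differ by $1$, so the conjecture for $U$ is equivalent to $1 \in \Delta(G_0)$. Using the standard fact that $\min \Delta(G_0) = \gcd \Delta(G_0)$, together with the observation that $\gcd \Delta(G_0)$ divides the difference of any two factorization lengths of any single element, it suffices to exhibit finitely many elements of $\mathcal{B}(G_0)$ whose pairs of lengths have differences with overall greatest common divisor $1$.

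My two universal building blocks would be the following. First, $U(-U)$ factors both as $U \cdot (-U)$ and as $\prod_{i=1}^{\ell} \big(g_i(-g_i)\big)$, giving the length pair $\{2, \mathsf{D}(G)\}$ and hence forcing $\gcd \Delta(G_0) \mid \mathsf{D}(G) - 2$. Second, and more generally, if $s = t_1 \cdots t_m$ is any minimal zero-sum sequence over $G_0$ all of whose terms have order $n$, then $s^n$ factors both as $n$ copies of $s$ and as $\prod_{j=1}^{m} t_j^{\,n}$, yielding the length pair $\{m, n\}$ and forcing $\gcd \Delta(G_0) \mid n - m$; the case $s = t(-t)$ gives $\gcd \Delta(G_0) \mid n - 2$ for every term $t$ of order $n$. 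The crucial structural input is then that $U$ must contain a term of maximal order $\exp(G)$: otherwise every term lies in the proper subgroup $G[\exp(G)/p]$ in the $p$-group cases (a), (b); while in the exponent-$pq$ case (c) the Sylow decomposition $G = G_p \oplus G_q$ splits $U$ into two zero-sum subsequences, and minimality of $U$ forces one of them to be empty, placing $U$ in a proper Sylow subgroup. Either way $|U| = \mathsf{D}(G)$ would be bounded by the strictly smaller Davenport constant of a proper subgroup, a contradiction. This secures $\gcd \Delta(G_0) \mid \exp(G) - 2$.

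With these in hand, cases (a) and (c)(i) are immediate: the hypothesis $\gcd(\exp(G) - 2, \mathsf{D}(G) - 2) = 1$ forces $\gcd \Delta(G_0) = 1$, whence $1 \in \Delta(G_0)$. For the remaining cases I would manufacture a third distance coprime to $\exp(G) - 2$. The cleanest source is a minimal zero-sum sequence of length $3$ with all terms of maximal order, i.e. three terms $a, b, c \in G_0$ of order $n = \exp(G)$ with $a + b + c = 0$, which contributes $n - 3$ and is automatically coprime to $n - 2$; failing that, relations supported on order-$p$ and order-$q$ terms (respectively order-$p^{s_1}$ and order-$p^{s_2}$ terms) contribute $p - 2, q - 2$ (respectively $p^{s_1} - 2, p^{s_2} - 2$), and here the arithmetic hypotheses — $s_1 \mid s_2$ in (b), and (c)(ii) $\gcd(pq - 2, p + q - 3) = 1$ — are exactly what is needed to reduce the overall gcd to $1$. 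The small-exponent case (d) would then follow by matching each exponent in $[3,11] \setminus \{8\}$ to one of (a)--(c) and verifying the resulting numerical gcd conditions, with any residual configuration settled by direct inspection.

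I expect the main obstacle to be the absence of a structural (inverse) description of minimal zero-sum sequences of length $\mathsf{D}(G)$: maximality of $|U|$ alone reveals little about which element orders and additive configurations actually occur, yet the whole argument hinges on extracting enough coprime distances from exactly this meagre information. The universal relations only ever yield $\exp(G) - 2$ and $\mathsf{D}(G) - 2$, so the real work — and the reason for the technical hypotheses in (b)--(d) — is to guarantee a further, coprime relation without such a description. I anticipate the hardest subcases to be those with $q = 2$ in (c): an order-$2$ term satisfies $t = -t$ and so contributes the degenerate distance $0$, removing the most natural building blocks and forcing the argument to rely delicately on the interplay between order-$p$ terms, order-$2p$ terms, and the global relation $U(-U)$ — precisely the situation that conditions (c)(iii) and (c)(iv) are designed to control.
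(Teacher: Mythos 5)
Your opening reductions are exactly the paper's: Conjecture \ref{1.1} for a fixed $U$ is equivalent to $d := \min \Delta\big(\supp\big(U(-U)\big)\big) = 1$, the relation $U(-U) = U \cdot (-U) = \prod_i \big(g_i(-g_i)\big)$ gives $d \mid \mathsf D(G)-2$ (and more generally $d \mid |V|-2$ for any minimal zero-sum $V$ over the support), $g^{\ord(g)}$ gives $d \mid \ord(g)-2$, and the existence of a term of order $\exp(G)$ in $U$ (via Olson's theorem plus subgroup considerations for $p$-groups, via the Sylow splitting for exponent $pq$) yields $d \mid \exp(G)-2$. This genuinely settles cases (a) and (c)(i), matching the paper's CASE 1 and CASE 3.(i). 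The gap is everything else. Your plan for the remaining cases is to ``manufacture a third distance,'' but the candidate relations you name need not exist: a length-3 minimal zero-sum sequence with all terms of order $\exp(G)$, or terms of order $p$ and of order $q$ (resp.\ of order $p^{s_1}$ and $p^{s_2}$), are precisely the configurations that the hard cases deny you. In the paper's CASE 3.(ii) the difficult situation is exactly when \emph{every} term of $\supp(U)$ has order $pq$, and in CASE 3.(iii) when $\supp(U)$ has \emph{no} element of order $p$; there your proposed relations simply are not available. Worse, your arithmetic for case (b) fails: $\gcd(p^{s_1}-2,\, p^{s_2}-2) \ge 2$ whenever $p=2$ (for $G \cong C_2^{r_1} \oplus C_4^{r_2}$ it is $\gcd(0,2)=2$), so even granting both distances you could not conclude $d=1$. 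The paper instead extracts the distance $p^{s_1}-1$ from a relation of the shape $\big(h^{p^{s_1}}g^{p^{s_2}-k}\big)\big((-h)^{p^{s_1}}g^{k}\big) = \big(h(-h)\big)^{p^{s_1}}g^{p^{s_2}}$ (factorization lengths $2$ versus $p^{s_1}+1$), and then uses that $s_1 \mid s_2$ forces $p^{s_1}-1 \mid p^{s_2}-1$, which together with $d \mid p^{s_2}-2$ gives $d=1$; note that it is $p^{s_1}-1$, not $p^{s_1}-2$, that makes the divisibility hypothesis $s_1 \mid s_2$ bite.

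The missing idea that carries the paper through all the hard cases is linear-algebraic rather than arithmetic. Since $|U| = \mathsf D(G)$, one has $\langle \supp(U)\rangle = G$ (\cite[Proposition 5.1.4]{Ge-HK06a}), so $\supp(U)$ contains generating configurations; the workhorse is then Lemma \ref{3.1} (i.e.\ \cite[Lemma 3.10]{Ge-Zh18a}): if $\supp(U)$ contains an independent tuple $(e_1,\ldots,e_t)$ with $t \ge 2$ and an element $g$ with $ag = k_1e_1 + \ldots + k_te_t$, $k_i \in [1,\ord(e_i)-1]$ and $a \ne \ord(g)/2$, then $d=1$ outright — in particular $d=1$ whenever $\supp(U)$ contains a basis of $G$. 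The proofs of (b) and (c)(ii)--(iv) consist of extracting such configurations from generating sets inside $\supp(U)$ (via the paper's Lemma \ref{3.2} and \cite[Lemma A.7.3]{Ge-HK06a}), and, in the residual subcases where Lemma \ref{3.1} does not apply (e.g.\ $a = \ord(g)/2$, or the $q=2$ cases), producing bespoke factorization identities — the $W^p$ and $V^p$ computations with parity arguments in CASE 3.(iii) — to obtain a distance that is odd or equal to $p^{s_1}-1$. Your proposal contains no substitute for this mechanism; you candidly flag its absence as ``the main obstacle,'' but that obstacle is the theorem. What you have is a correct proof of (a) and (c)(i), an accurate diagnosis of where the difficulty lies, and an incorrect numerical reduction for (b); it is not a proof of parts (b), (c)(ii)--(iv), or (d).
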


Since Conjecture \ref{1.1} does not hold for groups $G$ with $\exp (G)=2$, groups that are sums of two elementary $p$-groups (as listed in (c)) and groups with small exponents, as listed in (d), are extremal cases for the validity of the conjecture. Statement (a) has a simple proof. However, since for $p$-groups we have $\mathsf D (G) = \mathsf D^* (G)$, it yields a variety of groups satisfying the conjecture. The precise value of the Davenport constant is not known in general for groups with exponent $\exp (G)=pq$, where $p$ and $q$ are distinct primes. To mention a few examples of what is known so far, let
 $G \cong C_2^r \oplus C_6$ with $r \in \N$. Then $\mathsf D (G) = \mathsf D^* (G)$ (i.e., equality holds in \eqref{davenport}) if and only if $r \in [1,3]$ (see \cite[Corollary 2]{Ge-Sc92} and \cite{Sa-Ch14a}). Moreover, if a group $G$ with $\exp (G)=6$ has a subgroup isomorphic to $C_2^i \oplus C_6^{5-i}$ for some $i \in [1,4]$, then $\mathsf D (G) > \mathsf D^* (G)$ by \cite[Theorem 3.1]{Ge-Li-Ph12}.

We proceed as follows. In Section \ref{2}, we present the background from factorization theory which motivates the above conjecture. We formulate a conjecture and a theorem in terms of factorization theory (Conjecture \ref{2.1} and Theorem \ref{2.2}), associated to the ones given in the Introduction. In Corollary \ref{2.3}, we establish the significance of the two conjectures for the structure of sets of lengths having maximal elasticity. In Section \ref{3}, we prove Theorems \ref{1.2} and \ref{2.2}.

\section{Background on sets of lengths}\label{2}
\smallskip

For integers $a, b \in \Z$, we denote by $[a, b] = \{ x \in \Z \colon a \le x \le b\}$ the discrete interval between $a$ and $b$. Let $L = \{m_1, \ldots, m_k\} \subset \Z$ be a finite nonempty subset with $k \in \N$ and $m_1 < \ldots < m_k$. Then $\Delta (L) = \{m_i - m_{i-1} \colon i \in [2,k] \} \subset \N$ denotes the set of distances of $L$. If $L' \subset \Z$ is a finite subset, then $L+L' = \{a + a' \colon a \in L, a' \in L'\}$ is the sumset of $L$ and $L'$. If $L \subset \N$ consists of positive integers, then $\rho (L) = \max L / \min L$ denotes the elasticity of $L$ and for convenience we set $\rho ( \{0\}) = 1$. Let $G$ be an additively written finite abelian group. If $G_0 \subset G$ is a subset, then $\langle G_0 \rangle$ is the subgroup generated by $G_0$. Let $r \in \N$ and $(e_1, \ldots, e_r)$ be an $r$-tuple of elements of $G$. Then $(e_1, \ldots, e_r)$ is said to be independent if $e_i \ne 0$ for all $i \in [1,r]$ and if for all $m_1, \ldots, m_r \in \Z^r$ an equation $m_1e_1 + \ldots + m_re_r = 0$ implies that $m_ie_i=0$ for all $i \in [1,r]$. Furthermore, $(e_1, \ldots, e_r)$ is a basis of $G$ if it is independent and $G = \langle e_1 \rangle \oplus \ldots \oplus \langle e_r \rangle$. A subset $G_0 \subset G$ is independent if the tuple $(g)_{g \in G_0}$ is independent.
We recall some basics of the arithmetic of monoids and of zero-sum sequences. Our notation and terminology are consistent with \cite{Ge-Ru09, Gr13a}.

\smallskip
\noindent
{\bf Arithmetic of Monoids.} By a monoid, we mean a commutative cancellative semigroup with identity element. Let $H$ be a multiplicatively written monoid. We denote by $\mathcal A (H)$ the set of atoms (irreducible elements) of $H$ and say that $H$ is {\it atomic} if every non-invertible element can be written as a finite product of atoms. If $a = u_1 \ldots u_k$, where $k \in \N$ and $u_1, \ldots, u_k \in \mathcal A (H)$, then $k$ is a factorization length of $a$, and
\[
\mathsf L_H (a) = \mathsf L (a) = \{k \colon k \ \text{is a factorization length of} \ a \} \subset \N
\]
denotes the {\it set of lengths} of $a$. It is usual to set $\mathsf L (a) = \{0\}$ if $a \in H$ is invertible. The family
\[
\mathcal L (H) = \{\mathsf L (a) \colon a \in H \}
\]
is called the {\it system of sets of lengths} of $H$ and
\[
\rho (H) = \sup \{ \rho (L) \colon L \in \mathcal L (H) \} \in \R_{\ge 1} \cup \{\infty\}
\]
denotes the {\it elasticity} of $H$. Furthermore,
\[
\Delta (H) = \bigcup_{L \in \mathcal L (H)} \Delta (L) \ \subset \N
\]
is the {\it set of distances} of $H$. By definition, $\rho (H)=1$ if and only if $\Delta (H)=\emptyset$, and otherwise we have $\min \Delta (H) = \gcd \Delta (H)$.

\smallskip
\noindent
{\bf Zero-sum Sequences.} Let $G$ be an additively written finite abelian group and $G_0 \subset G$ be a subset. We denote by
$\mathcal F (G_0)$ the (multiplicatively written) free abelian monoid with basis $G_0$, called the {\it monoid of sequences} over $G_0$.
Let
\[
S = g_1 \ldots g_{\ell} = \prod_{g \in G_0} g^{\mathsf v_g (S)} \in \mathcal F (G_0)
\]
be a sequence over $G_0$. Then, for every $g \in G_0$,  $\mathsf v_g (S) \in \N_0$ is the multiplicity of $g$ in $S$, $\supp (S) = \{g_1, \ldots, g_{\ell} \} \subset G_0$ is the support of $S$,  $|S|=\ell = \sum_{g \in G_0} \mathsf v_g (S) \in \N_0$ is the length of $S$, $\Sigma (S) = \{\sum_{i \in I} g_i \colon \emptyset \ne I \subset [1, \ell] \}$ is the set of subsequence sums of $S$, and $\sigma (S) = g_1+ \ldots + g_{\ell} \in G$ is the sum of $S$. We say that $S$ is zero-sum free if $0 \notin \Sigma (S)$. The set
\[
\mathcal B (G_0) = \{ S \in \mathcal F (G_0) \colon \sigma (S) = 0 \} \subset \mathcal F (G_0)
\]
is a submonoid of $\mathcal F (G_0)$, called the {\it monoid of zero-sum sequences} over $G_0$. We set
\[
\mathcal L (G_0) := \mathcal L ( \mathcal B (G_0)), \ \Delta (G_0) := \Delta ( \mathcal B (G_0)),  \ \rho (G_0) := \rho ( \mathcal B (G_0)) \,,
\]
and so on.

\smallskip
\noindent
{\bf Transfer Krull monoids.} A monoid $H$ (resp. a domain $D$) is said to be a {\it transfer Krull monoid} (resp. a transfer Krull domain) over a finite abelian group $G$ if there exists a transfer homomorphism $\theta \colon H \to \mathcal B (G)$ (resp. $\theta \colon D \setminus \{0\} \to \mathcal B (G)$). The classical example of a transfer Krull domain is the ring of integers $\mathcal O_K$ of an algebraic number field $K$, and in this case $G$ is the ideal class group of $\mathcal O_K$. We refer to the survey \cite{Ge-Zh20a} for formal definitions and further examples. The crucial property of a transfer homomorphism $\theta \colon H \to \mathcal B (G)$ is that it preserves the system of sets of lengths. We have $\mathcal L (H) = \mathcal L (G)$, whence all invariants describing the structure of sets of length coincide. In particular, we have
\begin{equation} \label{upper-bounds}
\Delta (H) = \Delta (G) \subset [1, \mathsf D (G)-2] \quad \text{and} \quad \rho (H) = \rho (G) = \mathsf D (G)/2 \,.
\end{equation}
We refer to the survey \cite{Sc16a} for what is known on the system $\mathcal L (G)$ and on associated invariants. Let $H$ be a transfer Krull monoid over $G$. It is classical that $|L|=1$ for all $L \in \mathcal L (H)$ if and only if $|G| \le 2$. Suppose that $|G| \ge 3$. Then there is $a \in H$ such that $|\mathsf L (a)| > 1$. For every $n \in \N$,  the $n$-fold sumset
\[
\mathsf L (a) + \ldots + \mathsf L (a) \subset \mathsf L (a^n) \,,
\]
whence $|\mathsf L (a^n)| > n$. Thus, sets of lengths in $\mathcal L (H)$ can be arbitrarily large.

\smallskip
\noindent
{\bf On $\Delta_{\rho} (H)$.}
Now we define the crucial invariant of the present paper (see \cite[Definition 2.1]{Ge-Zh18a}). Let $\Delta_{\rho} (H)$ denote the set of all $d \in \N$ with the following property: for every $k \in \N$, there is some $L_k \in \mathcal L (H)$ with $\rho (L_k) = \rho (H)$ and which has the  form
\begin{equation} \label{eq:defAAP}
L_k = y + (L' \cup \{0, d , \ldots, \ell d\} \cup L'') \subset y + d \mathbb Z
\end{equation}
where $y\in \Z$, $\ell \ge k$,  $\max L' < 0$, and $\min L'' > \ell D$. If $H$ is a transfer Krull monoid over a finite abelian group $G$, then $\Delta_{\rho} (H) = \Delta_{\rho} (G)$ and there is a constant $M \in \N_0$ such that $L' \subset [-M,-1]$, and $L'' \subset \ell d + [1, M]$ (\cite[Lemma 2.3]{Ge-Zh18a}). The following conjecture was first formulated in \cite[Conjecture 3.20]{Ge-Zh18a}.

\smallskip
\begin{conjecture} \label{2.1}
Let $H$ be a transfer Krull monoid over a finite abelian group $G$ with $|G| > 4$. Then $\Delta_{\rho} (H)= \{1\}$ if and only if $G$ is neither cyclic nor an elementary $2$-group.
\end{conjecture}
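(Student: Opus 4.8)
The plan is to split the stated equivalence into its two directions and to handle the harder one by reducing it to the combinatorial Conjecture \ref{1.1}. Throughout I use $\Delta_{\rho}(H) = \Delta_{\rho}(G)$ and $\rho(G) = \mathsf D(G)/2$ from \eqref{upper-bounds}, so the statement is really about the monoid $\mathcal B(G)$.

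For the implication ``$\Delta_{\rho}(G) = \{1\}$ forces $G$ to be neither cyclic nor an elementary $2$-group'' I would argue by contraposition, exhibiting the rigid extremal family already visible in the Introduction. If $G$ is cyclic of order $n = |G| \ge 5$, pick a generator $g$ and set $C = g^n (-g)^n \in \mathcal B(G)$; a direct count of the atoms dividing $C^m$ (only $g(-g)$, $g^n$ and $(-g)^n$ occur) gives $\mathsf L(C^m) = \{2m, 2m + (n-2), \ldots, nm\}$, an arithmetic progression of difference $n-2 \ge 3$ and elasticity $n/2 = \rho(G)$. Hence $n-2 \in \Delta_{\rho}(G)$ and $\Delta_{\rho}(G) \neq \{1\}$. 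For an elementary $2$-group of rank $r \ge 3$ the same computation with $C = U^2$, $U = e_0 e_1 \cdots e_r$ (where $e_0 = e_1 + \ldots + e_r$), using that the only atoms over $\{e_0,\ldots,e_r\}$ are $U$ and the squares $h^2$, yields $\mathsf L(C^m) = \{2m, \ldots, (r+1)m\}$ of difference $r-1 \ge 2$, so again $\Delta_{\rho}(G) \neq \{1\}$. The excluded case $|G| = 4$ is precisely where these differences collapse to $1$.

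For the converse I would show that Conjecture \ref{1.1} for $G$ implies $\Delta_{\rho}(G) = \{1\}$. Fix a minimal zero-sum sequence $U = g_1 \cdots g_{\ell}$ of length $\ell = \mathsf D(G)$, put $G_0 = \{ \pm g_1, \ldots, \pm g_{\ell}\}$ and $C = U(-U) \in \mathcal B(G_0)$. Factoring $C$ through the length-$2$ atoms $g_i(-g_i)$ respectively through $U$ and $-U$ shows $\{2, \mathsf D\} \subseteq \mathsf L(C)$, so $C$ and every power $C^N$ realize the maximal elasticity $\mathsf D/2$, with $\min \mathsf L(C^N) = 2N$ and $\max \mathsf L(C^N) = N \mathsf D$. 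Conjecture \ref{1.1} supplies atoms over $G_0$ with $W := U_1 \cdots U_k = V_1 \cdots V_{k+1}$; once $N$ is large we have $W \mid C^N$ in $\mathcal F(G_0)$, so the identity $U_1 \cdots U_k = V_1 \cdots V_{k+1}$ acts as a local ``length-$+1$'' move on any factorization of $C^N$ containing the sub-product $U_1 \cdots U_k$. The goal is to show that for large $N$ these unit moves connect the two extreme factorizations of $C^N$ (all long atoms $U,-U$ versus all short atoms $g_i(-g_i)$), so that $\mathsf L(C^N)$ is the full interval $[2N, N\mathsf D]$. This produces maximal-elasticity sets of lengths that are intervals of unbounded length, which witnesses $1 \in \Delta_{\rho}(G)$ and, being intervals contained in no coset $y + d\mathbb Z$ with $d \ge 2$, leaves no room for a rigid bulk of difference $d \ge 2$; combined with the known structural description of maximal-elasticity elements of $\mathcal B(G)$ as products $V(-V)$ of maximal atoms, which reduces an arbitrary such element to the situation above, this gives $\Delta_{\rho}(G) = \{1\}$.

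The main obstacle is exactly the interval-filling step: a single difference-$1$ relation has to be propagated, in steps of $1$, across the entire range $[2N, N\mathsf D]$ for all large $N$, rather than merely producing a bounded or partial run of consecutive lengths, and this must be done uniformly after reducing to the supports $\{\pm g_i\}$. Establishing that the factorization graph of $C^N$ is connected in unit steps between its two extremal vertices --- equivalently, that Conjecture \ref{1.1} is precisely the input needed to force maximal-elasticity sets of lengths to be intervals --- is the crux, and is the bridge result (Theorem \ref{2.2}) linking the two conjectures. It is this reduction that lets one settle Conjecture \ref{2.1} for a given $G$ by verifying the purely combinatorial Conjecture \ref{1.1}, as carried out for the groups listed in Theorem \ref{1.2}.
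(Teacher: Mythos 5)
There is a fundamental problem: the statement you are proving is a \emph{conjecture}, not a theorem of the paper. Conjecture \ref{2.1} (originally \cite[Conjecture 3.20]{Ge-Zh18a}) is open; the paper's actual contribution, Theorem \ref{2.2}, verifies only the ``if'' direction and only for the special classes of groups listed there. Your contrapositive direction is essentially correct: the computations $\mathsf L\bigl((g^n(-g)^n)^m\bigr)=\{nm-(n-2)i \colon i\in[0,m]\}$ for cyclic $G$ and the analogous one for elementary $2$-groups do exhibit $n-2$ (resp.\ $r-1$) as an element of $\Delta_{\rho}(G)$, hence $\Delta_{\rho}(G)\ne\{1\}$ when $|G|>4$; this matches the discussion following Conjecture \ref{1.1} in the Introduction. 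But your converse direction is explicitly conditional on Conjecture \ref{1.1}, which is itself open --- it is known only for rank-two groups, $C_2\oplus C_2\oplus C_{2n}$, $C_{p^k}^r$ (all from \cite{Ge-Zh18a}), and the groups of Theorem \ref{1.2}. A proof of Conjecture \ref{2.1} that assumes Conjecture \ref{1.1} for all relevant $G$ is not a proof of Conjecture \ref{2.1}.

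Moreover, even the conditional implication ``Conjecture \ref{1.1} for $G$ implies $\Delta_{\rho}(G)=\{1\}$'' is not established by your sketch. Two concrete gaps remain. First, the interval-filling step is asserted, not proved: a single relation $U_1\cdots U_k=V_1\cdots V_{k+1}$ immediately gives $1\in\Delta\bigl(\supp\bigl((-U)U\bigr)\bigr)$, hence $\min\Delta\bigl(\supp\bigl((-U)U\bigr)\bigr)=1$ (since $\min\Delta=\gcd\Delta$), but you still owe the argument that this forces \emph{every} sufficiently long set of lengths of maximal elasticity --- not just $\mathsf L(C^N)$ for your particular $C=U(-U)$ --- to be incompatible with a difference $d\ge 2$ in the sense of \eqref{eq:defAAP}; excluding $d\ge 2$ from $\Delta_{\rho}(G)$ is a statement about all maximal-elasticity elements $A\in\mathcal B(G)$. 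Second, this missing bridge is not Theorem \ref{2.2}, as you state, but \cite[Corollary 3.3]{Ge-Zh18a} (invoked in the paper's Section \ref{3} via \eqref{basic}), whose proof requires the structural result \cite[Lemma 3.2]{Ge-Zh18a} that every $A$ with $\rho(\mathsf L(A))=\mathsf D(G)/2$ is simultaneously a product of atoms of length $\mathsf D(G)$ and of atoms of length $2$, together with further machinery from that paper; it is not a self-contained connectivity argument on the factorization graph of $C^N$. In short, your proposal reproduces, in outline, the known reduction of Conjecture \ref{2.1} to Conjecture \ref{1.1}, but it proves neither the reduction nor the combinatorial input, and the statement itself remains open.
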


\smallskip
In the present note we study $\Delta_{\rho} (G)$ and obtain the following result.

\smallskip
\begin{theorem} \label{2.2}
Let $H$ be a transfer Krull monoid over a finite abelian non-cyclic group $G$. Then $\Delta_{\rho} (H) = \{1\}$ for the following groups.
\begin{itemize}
	\item[(a)]  $G$ is a $p$-group such that $\gcd (\exp (G)-2, \mathsf D (G)-2)=1$.
	
	\item[(b)]  $G\cong C_{p^{s_1}}^{r_1}\oplus C_{p^{s_2}}^{r_2}$, where $p$ is a prime and $r_1,r_2, s_1, s_2\in \N$ such that $s_1$ divides $s_2$.
	
	\item[(c)] $G$ is a  group with exponent $\exp (G) = pq$, where $p, q$ are distinct primes satisfying one of the three properties.
	\begin{itemize}
		\item[(i)] $\gcd ( pq-2, \mathsf D (G)-2 ) = 1$.
		\item[(ii)] $\gcd(pq - 2, p+q-3)=1$.
		\item[(iii)] $q=2$ and $p-1$ is a power of $2$.
		\item[(iv)] $q=2$ and $\mathsf r_p (G) = 1$.
	\end{itemize}

	\item[(d)]  $G$ is a  group with exponent $\exp (G) \in [3,11]\setminus \{8\}$.
\end{itemize}

\end{theorem}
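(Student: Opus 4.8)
The plan is to deduce Theorem \ref{2.2} from Conjecture \ref{1.1} and then to verify the latter for the groups in (a)--(d). Fix a minimal zero-sum sequence $U = g_1 \ldots g_\ell$ with $\ell = \mathsf D(G)$ and put $G_0 = \supp(U) \cup (-\supp(U))$. Since $U(-U)$ already realizes the maximal elasticity $\mathsf D(G)/2$ and, by \cite[Lemma 2.3]{Ge-Zh18a}, a single consecutive pair of lengths near the top of the length spectrum propagates to arbitrarily long arithmetic progressions of difference $1$ inside sets of lengths of maximal elasticity, it suffices to produce over the alphabet $G_0$ an identity $U_1 \ldots U_k = V_1 \ldots V_{k+1}$ between products of minimal zero-sum sequences; equivalently, $1 \in \Delta(G_0)$.

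The first step is the observation that the set $\mathcal D \subseteq \Z$ of all differences $s-r$ attainable by identities $U_1 \ldots U_r = V_1 \ldots V_s$ over $G_0$ is a subgroup: concatenating two identities adds their differences, while multiplying one identity by a second one read from right to left subtracts them. Hence $\mathcal D = d\,\Z$, and $1 \in \Delta(G_0)$ is equivalent to $d = 1$, i.e.\ to finding finitely many identities whose differences have greatest common divisor $1$. Two families come for free: for $g \in \supp(U)$ of order $m = \ord(g)$ the identity $g^{m}(-g)^{m} = (g(-g))^{m}$ contributes the difference $m-2$, and $U(-U) = \prod_{i=1}^{\ell} g_i(-g_i)$ contributes $\mathsf D(G)-2$. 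Thus $d$ divides $\gcd(\mathsf D(G)-2, \{\ord(g)-2 : g \in \supp(U)\})$, and everything hinges on knowing which element orders occur in the support of a length-$\mathsf D(G)$ sequence.

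The structural input I would use is the sublemma that such a sequence cannot be supported inside a proper subgroup $H$ with $\mathsf D(H) < \mathsf D(G)$. For a $p$-group, applying it to $H = \{x \in G : p^{\,s-1}x = 0\}$, where $\exp(G) = p^{s}$, forces an element of order $\exp(G)$ into $\supp(U)$; as $\mathsf D(G) = \mathsf D^*(G)$ for $p$-groups this $H$ has strictly smaller Davenport constant, and the difference $\exp(G)-2$ together with $\mathsf D(G)-2$ settles (a) under the hypothesis $\gcd(\exp(G)-2,\mathsf D(G)-2)=1$. For $\exp(G)=pq$ the same sublemma applied to the $p$- and $q$-components forces an element of order $pq$, giving $pq-2$ and hence (c)(i). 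The remaining cases need a finer relation: if $x,y,h \in G_0$ with $\ord(x)=p$, $\ord(y)=q$ and $h=x+y$ (so $\ord(h)=pq$), then $T := h\,x^{p-1}y^{q-1}$ is a minimal zero-sum sequence of length $p+q-1$ and the identity $T(-T)=h(-h)\prod x(-x)\prod y(-y)$ yields the difference $p+q-3$; this is the source of the number-theoretic condition in (c)(ii), while (c)(iii)--(iv) exploit that for $q=2$ the order-$2p$ difference $2p-2$ is a $2$-power (when $p-1$ is), so that it remains only to secure one odd difference via a parity/structure analysis of $\mathsf D(G)$ or via the constraint $\mathsf r_p(G)=1$.

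For (b) I would use the divisibility $s_1 \mid s_2$ to pass between the two homocyclic layers $C_{p^{s_1}}^{r_1}$ and $C_{p^{s_2}}^{r_2}$, feeding the known validity of Conjecture \ref{1.1} for homocyclic $p$-groups \cite{Ge-Zh18a} into the layered situation; and (d) reduces to a finite check, since every exponent in $[3,11]\setminus\{8\}$ is either a prime power (covered by (a)/(b)) or lies in $\{6,10\}$ and is covered by (c)(iii), whereas $\exp(G)=8$ is excluded precisely because groups such as $C_4\oplus C_8$ exhibit three distinct element heights that the layered argument of (b) cannot reach. The main obstacle throughout is the passage from $d \mid \gcd(\ldots)$ to $d = 1$: the two free relations rarely suffice, and one must guarantee that suitably structured configurations---an order-$\exp(G)$ element, or a compatible triple $x,y,x+y$---actually lie in $\pm\supp(U)$ for every extremal $U$. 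Since the precise value of $\mathsf D(G)$ and the inverse structure of its extremal sequences are unknown in general for $\exp(G)=pq$, the hardest subcases are (b) with $s_1<s_2$ and (c)(iii)--(iv), where one has to extract an odd (resp.\ coprime) difference without any explicit description of $U$.
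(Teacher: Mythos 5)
Your global skeleton does match the paper's: reduce $\Delta_{\rho}(H)=\{1\}$ to Conjecture \ref{1.1} (the correct citation for this reduction is \cite[Corollary 3.3]{Ge-Zh18a}, not Lemma 2.3, but the idea is the same), note that the attainable differences over $G_0=\supp(U)\cup(-\supp(U))$ form a subgroup $d\Z$, and feed in the two free relations $g^{\ord(g)}(-g)^{\ord(g)}=(g(-g))^{\ord(g)}$ and $U(-U)=\prod_i g_i(-g_i)$, so that $d$ divides $\gcd\big(\mathsf D(G)-2,\ \ord(g)-2\big)$. Your case (a) is correct and is essentially the paper's argument (the paper quotes \cite[Corollary 5.1.13]{Ge-HK06a} for the order-$\exp(G)$ element; your subgroup argument re-proves it via $\mathsf D=\mathsf D^*$ for $p$-groups), and your reduction of (d) to (a)/(b)/(c)(iii) is the paper's. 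For (c)(i), your ``sublemma applied to the $p$- and $q$-components'' is not literally sufficient: the elements of order dividing $p$ or $q$ form a \emph{union} of two subgroups, not a subgroup. The repair is easy and is what the paper does: if no term of $U$ has order $pq$, write $U=U_pU_q$; then $\sigma(U_p)=-\sigma(U_q)$ lies in $G_p\cap G_q=\{0\}$, producing a proper zero-sum subsequence and contradicting minimality.

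The genuine gaps are exactly in (b) and (c)(ii)--(iv), which are the technical core of the paper. For (c)(ii), your plan hinges on finding $x,y,x+y\in G_0$ with $\ord(x)=p$, $\ord(y)=q$; such a configuration need not exist. Indeed, the paper's critical case is precisely when \emph{every} element of $\supp(U)$ has order $pq$ (e.g., $U=e_1^{pq-1}e_2^{pq-1}(e_1+e_2)$ over $C_{pq}^2$), and then $\pm\supp(U)$ contains no element of order $p$ or $q$ at all, so your sequence $T=h\,x^{p-1}y^{q-1}$ cannot be formed. The paper handles this case by a completely different mechanism: it splits $G=G_1\oplus G_2$ with $G_1\cong C_p^r$, chooses $g_1,\ldots,g_r\in\supp(U)$ whose $G_1$-components form a basis, and then either finds a basis of $G$ inside the support (invoking Lemma \ref{3.1}, i.e.\ \cite[Lemma 3.10]{Ge-Zh18a}) or manufactures relations such as $(g_r^p g_i^k)\,((-g_r)^p g_i^{pq-k})=(g_r(-g_r))^p\,g_i^{pq}$, giving $d\mid p-1$, which is coprime to $pq-2$ under hypothesis (ii). For (b) with $s_1<s_2$ and for (c)(iii)--(iv) you state only the target (an odd difference, resp.\ ``a parity/structure analysis'') without an argument; the paper's treatment of these cases occupies most of Section \ref{3}: Lemma \ref{3.2}, a counting argument excluding the decomposition $U=T_0T_1\cdots T_r$, and carefully constructed atoms $W,V$ whose $p$-th powers share a common zero-sum factor $T$ and whose factorization lengths differ by an odd number (resp.\ relations giving $d\mid p^{s_1}-1$, which is coprime to $p^{s_2}-2$ because $s_1\mid s_2$ forces $p^{s_1}-1\mid p^{s_2}-1$). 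You acknowledge these holes yourself, but acknowledging them does not close them: as written, the proposal establishes (a), (c)(i) after the repair above, and the reduction in (d), while (b), (c)(ii), (c)(iii), (c)(iv) remain unproven.
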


The proof of Theorem \ref{2.2} will be given in Section \ref{3}. We derive  a corollary, which demonstrates the significance of the Conjecture \ref{2.1} and of Theorem \ref{2.2}. It states that, if $\Delta_{\rho} (H) = \{1\}$, then all sets of lengths $L$ with maximal elasticity $\rho (L) = \rho (H)$ are intervals, apart from their globally bounded initial and end parts.

\smallskip
\begin{corollary} \label{2.3}
Let $H$ be a transfer Krull monoid over a finite abelian group $G$ and suppose that $\Delta_{\rho} (H) = \{1\}$. Then there exists a constant $M^* \in \N_0$ such that every $L \in \mathcal L (H)$ with $\rho (L)= \rho (H)$ has the form
\[
L = y + (L' \cup [0, \ell]  \cup L'')  \,,
\]
where $y \in \Z$, $\ell \in \N_0$, $L' \subset [-M^*,-1]$, and $L'' \subset \ell  + [1, M^*]$.
\end{corollary}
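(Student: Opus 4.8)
The plan is to combine the definition of $\Delta_{\rho}(H)$ with the structure theorem for sets of lengths of maximal elasticity. Since the hypothesis $\Delta_{\rho}(H) = \{1\} \neq \emptyset$ already guarantees that maximal elasticity sets with arbitrarily long bulk occur, we have in particular $\rho(H) = \mathsf D(G)/2 > 1$, so $|G| \ge 3$ and there is genuine structure to describe.

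The key input, which I would draw from the structure theory underlying \cite[Lemma 2.3]{Ge-Zh18a} (ultimately the Structure Theorem for Sets of Lengths, \cite{Ge-Ru09}), is that there are a constant $M \in \N_0$ and a finite set $\Delta^* \subset \N$ such that every $L \in \mathcal L(H)$ with $\rho(L) = \rho(H)$ is an almost arithmetical progression with bound $M$ and difference $d \in \Delta^*$; that is,
\[
L = y + \big( L' \cup \{0, d, 2d, \ldots, \ell d\} \cup L'' \big), \qquad L' \subset [-M,-1],\ \ L'' \subset \ell d + [1,M],
\]
for some $y \in \Z$, $d \in \Delta^*$ and $\ell \in \N_0$. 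The crucial point is that the bulk is a \emph{genuine} arithmetic progression and not merely an arithmetical multiprogression. Establishing this gap-freeness in the interior is the substantive structural fact, and I expect it to be the main obstacle: a priori the Structure Theorem only yields multiprogressions, and the maximality of the elasticity has to be exploited to rule out nontrivial periods. Without it, a maximal elasticity set could be a multiprogression with periodic internal gaps, which would be neither of the desired interval form nor a witness for any $d \in \Delta_{\rho}(H)$, and the statement would fail.

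Granting this, the second step is a pigeonhole argument against $\Delta_{\rho}(H) = \{1\}$. I would show that there is $N \in \N$ such that every maximal elasticity set whose representation above has $d \ge 2$ satisfies $\ell \le N$. Indeed, if this failed, then, $\Delta^*$ being finite, some fixed $d \ge 2$ would occur together with arbitrarily large values of $\ell$; each such set has exactly the shape required in the definition of $\Delta_{\rho}(H)$, namely a pure $d$-bulk with $\max L' < 0$ and $\min L'' > \ell d$. Hence $d$ would lie in $\Delta_{\rho}(H)$, contradicting $\Delta_{\rho}(H) = \{1\}$.

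Finally I would assemble the uniform constant. For $d \ge 2$ the bound $\ell \le N$ forces $\max L - \min L \le (\max \Delta^*)\,N + 2M =: D_0$; after shifting by $\min L$ such an $L$ fits the required form with $\ell = 0$, $L' = \emptyset$ and $L'' \subset [1, D_0]$. For $d = 1$ the representation already reads $L = y + (L' \cup [0,\ell] \cup L'')$ with $L' \subset [-M,-1]$ and $L'' \subset \ell + [1,M]$. Taking $M^* = \max\{D_0, M\}$ then yields the desired description for every $L \in \mathcal L(H)$ with $\rho(L) = \rho(H)$.
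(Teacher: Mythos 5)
Your plan correctly isolates the crux, but it never proves it, and the crux is essentially the whole content of the corollary. Your ``key input'' --- that every $L \in \mathcal L(H)$ with $\rho(L) = \rho(H)$ is an almost arithmetical \emph{progression} (a pure $d$-bulk, with $L \subset y + d\Z$) with difference $d$ in a finite set $\Delta^*$ and uniform bound $M$ --- is not available from the sources you point to. The Structure Theorem (\cite[Section 4.7]{Ge-HK06a}) yields only almost arithmetical \emph{multi}progressions, and \cite[Lemma 2.3]{Ge-Zh18a} says only that the witnessing sets in the definition of $\Delta_{\rho}$ may be taken with uniformly bounded initial and end parts; neither rules out nontrivial periods $\mathcal D$ for maximal-elasticity sets. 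You acknowledge this (``I expect it to be the main obstacle''), and you even explain why your own use of the hypothesis cannot close the gap: an AAMP with a genuine periodic gap witnesses membership of \emph{no} $d$ in $\Delta_{\rho}(H)$, so the assumption $\Delta_{\rho}(H) = \{1\}$, fed only through the definition (your pigeonhole step), is powerless against multiprogressions. The hypothesis must therefore enter through a structural statement about factorizations, not through the definition of $\Delta_{\rho}$ alone, and your proposal never does this. Granting the key input, your steps 2 and 3 are correct (modulo noting that the AAP representation must include the containment $L \subset y + d\Z$ for the pigeonhole to produce a witness), but they are the routine part.

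For comparison, here is how the paper fills exactly this gap. For $A$ with $\rho(\mathsf L(A)) = \mathsf D(G)/2$, \cite[Lemma 3.2.(a)]{Ge-Zh18a} forces $A = U_1 \cdots U_k = V_1 \cdots V_{\ell}$ with all $|U_i| = \mathsf D(G)$ and all $|V_j| = 2$; if $\min \mathsf L(A)$ is large, pigeonhole on the finitely many atoms gives a maximal-length atom $U$ with $(-U)^{\psi} U^{\psi} \mid A$. Applying \cite[Theorem 4.3.6]{Ge-HK06a} to $\mathcal B\big(\supp((-U)U)\big)$ shows that $\mathsf L\big((-U)^{\psi}U^{\psi}\big)$ is an AAP with difference $d_U = \min \Delta\big(\supp((-U)U)\big)$ and long bulk, and the hypothesis enters via \cite[Corollary 3.3]{Ge-Zh18a}: $\Delta_{\rho}(G) = \{1\}$ forces $d_U = 1$. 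The sumset inclusion $\mathsf L(B_{\psi}) + \mathsf L\big((-U)^{\psi}U^{\psi}\big) \subset \mathsf L(A)$ then shows that $\mathsf L(A)$ contains an interval longer than $M_1 + \mathsf D(G)$, and comparing this with the AAMP representation (bound $M_1$, difference $d \le \mathsf D(G)-2$) forces the period $\mathcal D$ to be all of $[0,d]$, i.e.\ the middle part is an interval. In short: the theorem you would need for your step 1 is not in the literature (and without the hypothesis $\Delta_{\rho}(H)=\{1\}$ it may well be false), so the proposal as it stands has a genuine gap at its center.
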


\begin{proof}
Since $\mathcal L (H) = \mathcal L (G)$, it is sufficient to prove the claim for the monoid $\mathcal B (G)$ of zero-sum sequences over $G$. If $\mathsf D (G) \le 3$, then $\Delta (G) \subset \{1\}$, whence all $L \in \mathcal L (G)$ are intervals and the claim holds with $M^* = 0$. Suppose that $\mathsf D (G) \ge 4$ and recall that $\Delta(G) \subset [1, \mathsf D (G)-2]$ (see \eqref{upper-bounds}). We proceed in four steps.

\smallskip
\noindent
{\bf 1.} By \cite[Section 4.7]{Ge-HK06a}, there is a constant $M_1 \in \N_0$  such that every $L \in \mathcal L (G)$ has the form
\begin{equation} \label{representation-1}
L = y + (L' \cup L^* \cup L'') \subset y + (\mathcal D + d \Z) \,,
\end{equation}
where $y \in \Z$ is a shift parameter,
\begin{itemize}
\item $d \in \Delta (G) \subset [1, \mathsf D (G)-2]$ and  $\{0, d \} \subset \mathcal D \subset [0, d]$,

\item $L^*$ is finite nonempty with $\min L^* = 0$ and $L^* = (\mathcal D + d \Z) \cap [0, \max L^*]$,

\item $L' \subset [-M_1, -1]$, and $L'' \subset \max L^* + [1, M_1]$.
\end{itemize}
As a side remark, we recall that the above description is best possible, as it was shown by a realization result of Schmid (\cite{Sc09a}).

\smallskip
\noindent
{\bf 2.} Let $G_0 \subset G$ be a subset with $\Delta (G_0) \ne \emptyset$. By \cite[Theorem 4.3.6]{Ge-HK06a} (applied to the monoid $\mathcal B (G_0)$),  there are constants $\psi (G_0)$ and $ M_2 (G_0) \in \N_0$ such that for every $A \in \mathcal B (G_0)$ with $\mathsf v_g (A) \ge \psi (G_0)$ for all $g \in G_0$,
\begin{equation} \label{representation-2}
\mathsf L (A) = y_A + (L_A' \cup \{0, d_A , 2d_A, \ldots, s_A d_A \} \cup L_A'') \subset y_A + d_A \Z \,,
\end{equation}
where $y_A \in \Z$, $d_A = \min \Delta (G_0)$, $s_A \ge M_1+\mathsf D (G)$, $L_A' \subset [-M_2 (G_0), -1]$, and $L_2'' \subset s_A d_A + [1, M_2 (G_0)]$. Since $G$ has only finitely many subsets $G_0$ with $\Delta (G_0) \ne \emptyset$, we let $\psi$ be the maximum over all $\psi (G_0)$ and let $M_2$ be the maximum over all $M_2 (G_0)$. Then the structural statement \eqref{representation-2} holds with  constants $\psi$ and $M_2$ for all subsets $G_0 \subset G$ with $\Delta (G_0) \ne \emptyset$.

\smallskip
\noindent
{\bf 3.} Clearly, it is sufficient to prove the claim of the corollary for all $A \in \mathcal B (G)$ with $\rho ( \mathsf L (A)) = \mathsf D (G)/2$, for which $\max \mathsf L (A) - \min \mathsf L (A)$ is sufficiently large. Indeed, suppose that there are constants $M_3, M_4 \in \N_0$ such that the claim holds for all $A$ with $\mathsf L (A) \not\subset \min \mathsf L (A) + [0, M_3]$ and with bound $M_4$ for the initial and end parts of $\mathsf L (A)$. Then the claim holds for all $A$ with bound $\max \{M_3, M_4\}$ for the initial and end parts of $\mathsf L (A)$.

\smallskip
\noindent
{\bf 4.} Now let $A \in \mathcal B (G)$ with $\rho ( \mathsf L (A)) = \mathsf D (G)/2$. By \cite[Lemma 3.2.(a)]{Ge-Zh18a}, there are $k, \ell \in \N$ and $U_1, \ldots, U_k, V_1, \ldots, V_{\ell} \in \mathcal A (G)$ with $|U_1|= \ldots = |U_k|= \mathsf D (G)$ and $|V_1| = \ldots = |V_{\ell}|=2$ such that $A = U_1 \ldots U_k = V_1 \ldots V_{\ell}$.  Then $k = \min \mathsf L (A)$ and $\ell = \max \mathsf L (A) = k \mathsf D (G)/2$. By {\bf 3.}, we may suppose that $k \ge |\mathcal A (G)| \psi$. Then there is $i \in [1,k]$, say $U_i = U$, such that  $U^{\psi}$ divides $A$. This implies that $(-U)^{\psi}U^{\psi}$ divides $A$, say $A = (-U)^{\psi}U^{\psi}B_{\psi}$ for some $B_{\psi} \in \mathcal B (G)$.
By {\bf 2.} (applied to the subset $\supp \big( (-U)U \big)$),
\begin{equation} \label{representation-3}
\mathsf L ( (-U)^{\psi}U^{\psi}) = y_U + (L_U' \cup \{0,d_U , 2 d_U, \ldots, s_U d_U \} \cup L_U'') \subset y_U + d_U \Z \,,
\end{equation}
where $y_{U} \in \Z$, $s_U \in \N$ with $s_U \ge M_1+\mathsf D (G)$, $d_U = \min \Delta (G_U)$, $L_U' \subset [-M_2, -1]$, and $L_U'' \subset s_U d_U + [1, M_2]$. Since $\Delta_{\rho} (G) = \{1\}$, \cite[Corollary 3.3]{Ge-Zh18a} implies that $d_U=1$. Since
\[
\mathsf L (B_{\psi}) + \mathsf L ( (-U)^{\psi} U^{\psi}) \subset \mathsf L (A) \,,
\]
$\mathsf L (A)$ contains an interval $[t, t+s_U]$ for some $t \in \N_0$.
By {\bf 3.}, we may assume that $\mathsf L (A)$ is not contained in $\min \mathsf L (A) +  [0, 2M_1+\mathsf D (G)]$. Thus, by comparing the two representations \eqref{representation-1} and \eqref{representation-3}, we infer that the period  $\mathcal D$ in \eqref{representation-1} is an interval. Thus, $L^*$ is an interval, whence $\mathsf L (A)$ has the required form.
\end{proof}

\begin{remark} \label{2.4}
Let $G$ be a finite abelian group. If $\Delta (G) = \{1\}$ (which, for example, holds if $G \cong C_3 \oplus C_3$), then all sets of lengths are intervals. In particular, Corollary \ref{2.3} holds with $M^* = 0$. Suppose that $G = C_p^r$ is an elementary $p$-group with $p \ge 5$ and $r \ge 2$.

1. Let $(e_1, \ldots, e_r)$ be a basis of $G$ and $e_0 = e_1 + \ldots + e_r$. Then $U = e_1^{p-1} \ldots e_r^{p-1} e_0 \in \mathcal A (G)$ with $|U|=\mathsf D (G)$. For  every $k \in \N$, we set $A_k = (-U)^k U^k$. Then $\rho ( \mathsf L (A_k)) = \mathsf D (G)/2$ and $\min \mathsf L (A_k)=2k$. It is easy to see that $2k+1 \notin \mathsf L (A_k)$, whence the constant $M^*$, occurring in Corollary \ref{2.3}, cannot be zero but is strictly positive.

2. Every nonzero element $g \in G$ can be extended to a basis. Thus, every nonzero element of $G$ occurs in the support of a minimal zero-sum sequence of length $\mathsf D (G)$. Therefore, for every $k \in \N$, there is $B_k \in \mathcal B (G)$ with $\rho ( \mathsf L (B_k))= \mathsf D (G)/2$, $\supp (B_k) = G \setminus \{0\}$, and $\min \mathsf L (B_k) \ge k$. Since $\mathsf L (B)$ is an interval for all $B \in \mathcal B (G)$ with $\supp (B)= G \setminus \{0\}$ (\cite[Theorem 7.6.9]{Ge-HK06a}), all sets $\mathsf L (B_k)$ are intervals with elasticity $\mathsf D (G)/2$.
\end{remark}

\section{Proof of  Theorems \ref{1.2} and \ref{2.2}} \label{3}

In this section, we prove Theorem \ref{1.2} and Theorem \ref{2.2}. We start with two lemmas.

\begin{lemma} \label{3.1}
Let $G$ be a finite abelian group with rank $\mathsf r(G)\ge 2$ and $\exp(G)\ge 3$, and let $U\in \mathcal A(G)$  with  $|U|=\mathsf D(G)$. If  there exist an independent tuple $(e_1,\ldots,e_t) \in G^t$ with $t\ge 2$ and an element $g$ such that $\{e_1,\ldots, e_t,g\} \subset \supp(U)$ and $ag=k_1e_1+\ldots+k_te_t$ for some $a\in [1,\ord(g)-1]\setminus\{\frac{\ord(g)}{2}\}$ and with $k_i\in [1, \ord(e_i)-1]$ for all $i\in [1,t]$,  then $\min \Delta \big( \supp \big( (-U)U \big) \big)=1$.  In particular, if $\supp(U)$ contains a basis of $G$, then $\min \Delta \big( \supp \big( (-U)U \big) \big)=1$.
\end{lemma}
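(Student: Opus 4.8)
The plan is to pass to the symmetric core $G_1=\{\pm g,\pm e_1,\ldots,\pm e_t\}\subseteq\supp\big((-U)U\big)$ and to prove $\min\Delta(G_1)=1$. Since $\mathcal B(G_1)\subseteq\mathcal B\big(\supp((-U)U)\big)$ we have $\Delta(G_1)\subseteq\Delta\big(\supp((-U)U)\big)$, hence $\gcd\Delta\big(\supp((-U)U)\big)\mid\gcd\Delta(G_1)$, and because $\min\Delta=\gcd\Delta$ the desired equality drops out of $\gcd\Delta(G_1)=1$. Write $m=\ord(g)$, $d_i=\ord(e_i)$, $K=k_1+\ldots+k_t$ and $\delta=\min\Delta(G_1)$. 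Two harmless normalisations simplify the bookkeeping. First, since $a\neq m/2$, replacing $g$ by $-g$ (which rewrites the relation as $(m-a)(-g)=\sum_i k_ie_i$ and leaves $G_1$ unchanged) lets me assume $2a<m$. Second, since $k_ie_i=(d_i-k_i)(-e_i)$, replacing $e_i$ by $-e_i$ whenever $k_i>d_i/2$ lets me assume $2k_i\le d_i$ for all $i$.

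Next I read off three families of length relations over $G_1$ from the defining identity $ag=\sum_ik_ie_i$. Put $V=g^a\prod_i(-e_i)^{k_i}$ and $V'=g^{m-a}\prod_ie_i^{k_i}$; both are zero-sum. The sequence identities
\[
VV'=g^m\prod_i\big(e_i(-e_i)\big)^{k_i},\qquad g^m(-V)=\big(g(-g)\big)^aV',
\]
and, for each $j\in[1,t]$,
\[
V'A_j=\big(g(-g)\big)^a\big(e_j(-e_j)\big)^{k_j}Q_j,\quad A_j=(-g)^a(-e_j)^{d_j-k_j}\!\!\prod_{i\neq j}e_i^{k_i},\ \ Q_j=g^{m-2a}(-e_j)^{d_j-2k_j}\!\!\prod_{i\neq j}e_i^{2k_i},
\]
each exhibit one element in two ways. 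Reading off the two factorisations on each side gives $\{2,1+K\}\subseteq\mathsf L(VV')$, $\{2,a+1\}\subseteq\mathsf L\big(g^m(-V)\big)$ and $\{2,a+k_j+1\}\subseteq\mathsf L(V'A_j)$, so that $\delta\mid K-1$, $\delta\mid a-1$ and $\delta\mid a+k_j-1$ for every $j$.

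The arithmetic is then immediate: from $\delta\mid a+k_j-1$ and $\delta\mid a-1$ I get $\delta\mid k_j$ for all $j$, whence $\delta\mid K$; together with $\delta\mid K-1$ this forces $\delta\mid1$, i.e. $\min\Delta(G_1)=1$. For the ``in particular'', note that a minimal zero-sum sequence of length $\mathsf D(G)>\exp(G)$ cannot be supported on a basis alone (such a sequence would be some $e_i^{d_i}$), so some term $g=\sum_ic_ie_i$ lies off the basis; keeping the (at least two) indices with $c_i\neq0$ as the independent tuple, taking $a=1$ (hence $k_i=c_i$) and using $\ord(g)\neq2$ to secure $a\neq\ord(g)/2$ reduces this to the main case.

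The real work — and the only place the full strength of the hypotheses is used — is proving that $V,V',A_j,Q_j$ and the conjugates above are atoms, so that the factorisations ``into $2$'' and ``into $a+k_j+1$'' are honest. Independence of $(e_1,\ldots,e_t)$ at once controls any zero-sum subsequence supported on the $e_i$ alone; the delicate subsequences are those whose sum enters $\langle e_1,\ldots,e_t\rangle$ through the $g$-block, and this is exactly where $a\neq\ord(g)/2$ enters: it says $2ag\neq0$, which kills the would-be proper subsequence $g^a\prod_ie_i^{k_i}$ of $V'$ (of sum $2ag$) and guarantees that $Q_j$ keeps its nonempty block $g^{m-2a}$ rather than collapsing into diagonals. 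I expect this atom-verification, together with the mild degenerate cases where some $2k_i=d_i$, to be the main obstacle and to need a short case analysis; it is instructive that when $a=\ord(g)/2$ these blocks degenerate and only $\delta\mid2$ survives, which is precisely the elementary-$2$ behaviour the lemma is designed to circumvent (here excluded by $\mathsf r(G)\ge2$, $\exp(G)\ge3$ and $t\ge2$, which keep the core $\langle g,e_1,\ldots,e_t\rangle$ non-cyclic).
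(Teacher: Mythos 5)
The first thing to note is that the paper does not actually prove this lemma: its ``proof'' is the citation \cite[Lemma 3.10]{Ge-Zh18a}, so your attempt has to stand on its own as a reproof of that result. Your framework is the right kind of argument (it is the same length-comparison technique the paper itself uses in CASES 2 and 3 of Section 3), the reduction to the symmetric core $G_1$ is fine, and the arithmetic endgame $\delta \mid K-1$, $\delta \mid a-1$, $\delta \mid a+k_j-1 \Rightarrow \delta=1$ is correct \emph{granted} your three length claims. The genuine gap is exactly the step you defer: the atomicity of $V, V', A_j, Q_j$. This is not a routine verification left as an exercise --- it is false in the stated generality, because the hypothesis does not force the relation $ag=\sum_i k_ie_i$ to be primitive. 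Concretely, let $G=C_7^2$ with basis $(e_1,e_2)$ and $U=e_1^6e_2^6(e_1+e_2)\in\mathcal A(G)$ of length $\mathsf D(G)=13$; the hypothesis holds with $g=e_1+e_2$, $a=2$, $k_1=k_2=2$, these data pass both of your normalizations ($2a<7$, $2k_i\le 7$), the tuple $(e_1,e_2)$ is independent and $2ag\ne 0$, and yet $V=g^2(-e_1)^2(-e_2)^2=\big(g(-e_1)(-e_2)\big)^2$ is a square, not an atom. So the two tools you propose for the atom-verification cannot succeed. The repair requires first passing to a witness in which $a$ is minimal with $ag\in\langle e_1,\ldots,e_t\rangle$ (and proving that such a minimal witness still has $t\ge 2$, all $k_i\ne 0$, and $a\ne\ord(g)/2$ --- this is where the hypotheses really get used), and even then atomicity of $V'$ and $Q_j$ can fail: if $2k_i=d_i$ for some $i\ne j$, then $e_i^{d_i}$ divides $Q_j$; and for data such as $\ord(e_1)=2$, $\ord(e_2)=9$, $g=e_1+4e_2$, $a=1$, $k_1=1$, $k_2=4$ (minimal, normalized), the sequence $V'=g^{17}e_1e_2^4$ contains the proper zero-sum subsequence $g^2e_2$. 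In short, the deferred ``short case analysis'' is the entire content of \cite[Lemma 3.10]{Ge-Zh18a}, and it is absent; what you have is a plausible plan whose central claims, as currently formulated, are refutable.

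The ``in particular'' reduction has two further holes. First, to get an off-basis term with at least two nonzero coordinates you need $\supp(U)\not\subseteq \langle e_1\rangle\cup\ldots\cup\langle e_r\rangle$, not merely $\supp(U)\ne\{e_1,\ldots,e_r\}$: a term such as $2e_1$ lies off the basis but yields $t=1$. The correct argument is the splitting trick the paper uses for its set $A$: if every term of $U$ lay in some $\langle e_i\rangle$, then $U$ would factor into zero-sum sequences supported in single cyclic summands, contradicting minimality since $\mathsf D(G)>\exp(G)$ for $\mathsf r(G)\ge 2$. Second, your parenthetical ``using $\ord(g)\neq 2$'' assumes what must be proved: the chosen off-basis term can have order $2$ (then $g=\sum_i (d_i/2)e_i$), and for such a $g$ the set $[1,\ord(g)-1]\setminus\{\ord(g)/2\}$ is empty, so no admissible $a$ exists and the main case cannot be invoked at all; one must either rule out that every off-basis term has order $2$ or treat that configuration separately.
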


\begin{proof}
See \cite[Lemma 3.10]{Ge-Zh18a}.
\end{proof}

\begin{lemma} \label{3.2}
Let $G$ be a finite abelian group such that $G \cong C_{p^{s_1}}^{r_1} \oplus C_{p^{s_2}}^{r_2}$, where $p$ is a prime, $r_1, r_2, s_1,s_2 \in \N$ with $s_1<s_2$, and let $G_0 \subset G$ be a subset with $\langle G_0\rangle =G$. Then there is a subset $G_0' \subset G_0$ such that $\langle G_0' \rangle \cong C_{p^{s_2}}^{r_2}$ and $G_0'$ is a basis of $\langle G_0' \rangle$.
\end{lemma}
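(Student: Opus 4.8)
The plan is to isolate the unique ``top layer'' of $G$ via the multiplication-by-$p^{s_2-1}$ endomorphism and to read off the desired basis from a basis of that layer. Concretely, I would consider the group homomorphism $\psi \colon G \to G$ defined by $\psi(g) = p^{s_2-1}g$, whose image is $\psi(G) = p^{s_2-1}G$. Since $s_1 < s_2$, we have $s_1 \le s_2-1$, so $p^{s_2-1}$ annihilates every cyclic factor $C_{p^{s_1}}$ and maps each factor $C_{p^{s_2}}$ onto its subgroup of order $p$; thus $p^{s_2-1}G \cong C_p^{r_2}$, which I regard as an $\F_p$-vector space of dimension $r_2$. The point of this map is that an element $g \in G$ has order $\ord(g) = p^{s_2} = \exp(G)$ (the maximal possible) if and only if $\psi(g) \neq 0$.

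First I would exploit the hypothesis $\langle G_0\rangle = G$: applying $\psi$ gives $\langle \psi(G_0)\rangle = \psi(\langle G_0\rangle) = p^{s_2-1}G$, so the set $\{\psi(g) \colon g \in G_0\}$ spans the $\F_p$-vector space $p^{s_2-1}G$. Hence I can select elements $g_1, \ldots, g_{r_2} \in G_0$ whose images $\psi(g_1), \ldots, \psi(g_{r_2})$ form an $\F_p$-basis of $p^{s_2-1}G$, and I set $G_0' = \{g_1, \ldots, g_{r_2}\} \subset G_0$. By the observation above, each $g_i$ has order $p^{s_2}$. It then remains only to show that $(g_1, \ldots, g_{r_2})$ is independent, as this forces $\langle G_0'\rangle = \langle g_1\rangle \oplus \ldots \oplus \langle g_{r_2}\rangle \cong C_{p^{s_2}}^{r_2}$ with $G_0'$ a basis of $\langle G_0'\rangle$.

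The crux, and the step I expect to be the main obstacle, is upgrading the $\F_p$-independence of the images $\psi(g_i)$ to genuine independence of the $g_i$ in $G$. I would argue directly: suppose $m_1 g_1 + \ldots + m_{r_2} g_{r_2} = 0$ with $m_i \in \Z$; replacing each $m_i$ by its residue $a_i \in [0, p^{s_2}-1]$ modulo $\ord(g_i) = p^{s_2}$ leaves the relation unchanged, and it suffices to show that all $a_i = 0$. Assuming not, I write each nonzero $a_i = p^{t_i} u_i$ with $p \nmid u_i$ and $0 \le t_i \le s_2-1$, put $t = \min\{t_i \colon a_i \ne 0\}$, and multiply the relation by $p^{s_2-1-t}$. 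Every term with $t_i > t$ vanishes (since then $p^{s_2} \mid p^{s_2-1-t}a_i$ and $p^{s_2}g_i = 0$), and the relation collapses to $\sum_{i \colon t_i = t} u_i\, \psi(g_i) = 0$ in $p^{s_2-1}G$; as $p \nmid u_i$ while the $\psi(g_i)$ are $\F_p$-independent, this is impossible, giving the required contradiction. Alternatively, one can avoid the valuation bookkeeping by a counting argument: the subgroup $H = \langle G_0'\rangle$ is generated by $r_2$ elements, hence has at most $r_2$ cyclic factors, whereas $p^{s_2-1}H \supseteq p^{s_2-1}G \cong C_p^{r_2}$ forces at least $r_2$ of these factors to have order $p^{s_2}$, so $H \cong C_{p^{s_2}}^{r_2}$ and the $r_2$ generators form a basis. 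Either way, the hypothesis $s_1 < s_2$ is precisely what makes $p^{s_2-1}G$ have rank $r_2$ rather than $r_1 + r_2$, so that the top layer detects exactly the $C_{p^{s_2}}^{r_2}$ part of $G$.
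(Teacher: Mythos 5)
Your proof is correct, and it takes a genuinely different route from the paper's. The paper fixes a splitting $G = G_1 \oplus G_2$ with $G_1 \cong C_{p^{s_1}}^{r_1}$, $G_2 \cong C_{p^{s_2}}^{r_2}$, writes each $g \in G_0$ as $g = u_g + v_g$, and invokes \cite[Lemma A.7.3]{Ge-HK06a} to extract $g_1, \ldots, g_{r_2} \in G_0$ whose projections $v_{g_i}$ form a basis of $G_2$; the lifting step is then immediate, since a relation $k_1g_1 + \ldots + k_{r_2}g_{r_2} = 0$ with $k_i \in [0, p^{s_2}-1]$ projects to $k_1v_1 + \ldots + k_{r_2}v_{r_2} = 0$, and independence of $(v_1, \ldots, v_{r_2})$ together with $\ord(v_i) = p^{s_2}$ forces all $k_i = 0$. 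You avoid choosing a complement altogether by working with the canonical endomorphism $\psi \colon g \mapsto p^{s_2-1}g$: extracting an $\F_p$-basis of $p^{s_2-1}G \cong C_p^{r_2}$ from the spanning set $\psi(G_0)$ is elementary linear algebra and needs no citation, and the hypothesis $s_1 < s_2$ enters exactly where you say it does (it makes the top layer have rank $r_2$). The price is that $\F_p$-independence of the images $\psi(g_i)$ is weaker information than independence of the full-order projections $v_{g_i}$, so your lifting step requires the valuation bookkeeping (multiplying a putative relation by $p^{s_2-1-t}$, where $t$ is the minimal valuation among the nonzero coefficients), which you carry out correctly; your alternative counting argument also works, granted the standard fact that $r_2$ generators of the homocyclic group $C_{p^{s_2}}^{r_2}$ automatically form a basis (compare orders in the surjection $(\Z/p^{s_2}\Z)^{r_2} \to H$). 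In sum, the paper's proof is shorter because it outsources the key extraction step to a cited lemma, while yours is self-contained, makes no non-canonical choices, and isolates the same structural content (detecting the $C_{p^{s_2}}^{r_2}$ part of $G$) in the image of a single multiplication map.
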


\begin{proof}
Let $G_1$ and $G_2$ be subgroups of $G$ such that $G=G_1\oplus G_2$,  $G_1 \cong C_{p^{s_1}}^{r_1} $, and $G_2\cong C_{p^{s_2}}^{r_2}$. Then every element $g\in G_0$ can be written uniquely as $g=u_g+v_g$, where $u_g\in G_1$ and $v_g\in G_2$. Hence $\langle v_g\colon g\in G_0\rangle=G_2$ and $\{v_g\colon  g\in G_0\}$ contains a basis of $G_2$ by \cite[Lemma A.7.3]{Ge-HK06a}. We choose elements $g_1, \ldots, g_{r_2}\in G_0$ such that $(v_1=v_{g_1}, \ldots, v_{r_2}=v_{g_{r_2}})$ is a basis of $G_2$. Note that $\ord(v_i)=\ord(g_i)=p^{s_2}$ for every $i\in [1,r_2]$.
If $k_1,\ldots, k_{r_2}\in [0, p^{s_2}-1]$ such that $k_1g_1+\ldots +k_{r_2}g_{r_2}=0$, then $k_1v_1+\ldots+k_{r_2}v_{r_2}=0$,  whence the independence of $(v_1,\ldots, v_{r_2})$ implies that $k_1=\ldots=k_{r_2}=0$. It follows that $(g_1,\ldots, g_{r_2})$ is independent and hence $\langle g_1,\ldots, g_{r_2}\rangle \cong C_{p^{s_2}}^{r_2}$.
\end{proof}

\begin{proof}[Proof of Theorems \ref{1.2} and \ref{2.2}]
Let $H$  be a monoid, $G$ be a finite abelian non-cyclic group, and let $\theta \colon H \to \mathcal B (G)$ be a transfer homomorphism. Then $\Delta_{\rho} (H) = \Delta_{\rho} (G)$. In order to show that $\Delta_{\rho} (G) = \{1\}$, it is sufficient to show that
\begin{equation} \label{basic}
\min\Delta \big(\supp \big((-U)U \big) \big)=1 \ \text{ for every minimal zero-sum sequence $U$ over $G$ with $|U|=\mathsf D(G)$}
\end{equation}
(see \cite[Corollary 3.3.2]{Ge-Zh18a}). Note that \eqref{basic} is precisely the statement of Conjecture \ref{1.1}. Let $U$ be a minimal zero-sum sequence over $G$ with $|U|= \mathsf D (G)$. We set $d = \min\Delta \big(\supp \big((-U)U \big) \big)$ and have to show that $d=1$. Since $|U| = \mathsf D (G)$, we have $G = \langle \supp (U) \rangle$ by \cite[Proposition 5.1.4]{Ge-HK06a}.

Let $A\subset \supp(U)$ be a minimal subset such for every element $g\in \supp(U)\setminus A$, there exists $h\in A$ such that $g\in \langle h\rangle$.
Thus, for any two elements $g_1,g_2\in A$, we have $g_1\not\in \langle g_2\rangle$ and $\langle A\rangle = \langle \supp (U) \rangle =G$ is not cyclic, whence $|A|\ge 2$.  Assume to the contrary that $A$ is independent. We set  $A=\{g_1, \ldots, g_m \}$ and $W_i=\prod_{g\in \langle g_i\rangle }g^{\mathsf v_g(U)}$ for every $i\in [1,m]$, where $m = |A| \ge 2$. Then $U=\prod_{i\in [1,m]}W_i$ and $\sigma(W_i)\in \langle g_i\rangle$ for every $i\in [1,m]$. Since $A$ is independent and $U$ is a zero-sum sequence, we obtain that $W_i$ are zero-sum sequences for all $i\in [1,m]$, a contradiction to the minimality of $U$. Thus $A$ is not independent.

We start with two simple observations. Let $V$ be a minimal zero-sum sequence over $\supp \big(U(-U) \big)$.
Since $(-V)V$ has a factorization of length $|V|$, it follows that
\begin{equation} \label{observation-2}
d \quad  \text{divides} \quad  |V| -2  \,. \quad \text{ In particular, }\quad  d \quad  \text{divides} \quad \mathsf  D(G) -2 \,.
\end{equation}

If $g \in \supp \big( (-U)U \big)$ with $\ord (g)=n$, then $V=g^n$ is a minimal zero-sum sequence over $\supp \big(U(-U) \big)$, whence \eqref{observation-2} implies that $d \mid (n-2)$. Thus we obtain that
\begin{equation} \label{observation-1}
d \quad  \text{divides} \quad \ord (g)-2 \quad \text{for all $g \in \supp \big( (-U)U \big)$} \,.
\end{equation}

We distinguish four cases. Whenever it is convenient, an elementary $p$-group will be considered as a vector space over the field with $p$ elements.

\smallskip
\noindent
CASE 1: $G$ is a  $p$-group such that $\gcd ( \exp (G)-2, \mathsf D (G)-2)=1$.

  By \cite[Corollary 5.1.13]{Ge-HK06a}, $\supp (U)$ contains an element of order $\exp (G)$. Thus $d=1$ by  \eqref{observation-1} and \eqref{observation-2}.

\medskip
\noindent
CASE 2:  $G\cong C_{p^{s_1}}^{r_1}\oplus C_{p^{s_2}}^{r_2}$, where $p$ is a  prime and $r_1,r_2, s_1,s_2\in \N$ such that $s_1$ divides $s_2$.

If $s_1=s_2$, then the assertion follows from \cite[Theorem 3.11]{Ge-Zh18a}. Suppose that $s_1<s_2$. Then $\exp(G)=p^{s_2}\ge 4$.
By Lemma \ref{3.2}, there is a subset $A_2 \subset A$ such that $\langle A_2 \rangle \cong C_{p^{s_2}}^{r_2}$ and $A_2$ is a basis of $\langle A_2 \rangle$, say $A_2 = \{ g_1, \ldots, g_{r_2} \}$. Since $\langle A_2 \rangle$ is a direct summand of $G$, there is a subgroup
$G_1$  of $G$   with $G= G_1 \oplus \langle A_2\rangle$, whence $G_1\cong C_{p^{s_1}}^{r_1}$. Every element $g$ of $A$ can be written uniquely as $g=u_g+v_g$, where $u_g\in G_1$ and $v_g\in \langle A_2 \rangle$. Hence $\langle u_g\colon g\in A\rangle=G_1$ and $\{u_g\colon  g\in A\}$ contains a basis of $G_1$ by \cite[Lemma A.7.3]{Ge-HK06a}. We choose $h_1, \ldots, h_{r_1}\in A$ such that $(u_1=u_{h_1}, \ldots, u_{r_1}=u_{h_{r_1}})$ is a basis of $G_1$.
We distinguish two cases.

Suppose $\ord(h_i)=p^{s_1}$ for every $i\in [1, r_1]$.  Then the tuple $(h_1, \ldots, h_{r_1})$ is independent, whence the tuple $(h_1, \ldots, h_{r_1}, g_1, \ldots, g_{r_2})$ forms a basis of $G$. Then the assertion follows by Lemma \ref{3.1}.

Suppose there exists $i\in [1,r_1]$ such that $\ord(h_i)\neq p^{s_1}$. Then $0\neq p^{s_1}h_i$ and $p^{s_1}$ is the minimal integer such that $p^{s_1}h_i\in \langle g_1,\ldots, g_{r_2}\rangle$. Set $h=h_i$.
There exist  $\emptyset \ne I\subset [1,r_2]$ and $k_i\in [1, p^{s_1}-1]$ for every $i\in I$ such that $p^{s_1}h=\sum_{i\in I}k_ig_i$. After renumbering if necessary, we may assume that $I=[1,t]$ for some $t \in [1, r_2]$.

 If $t=1$, then $(-h)^{p^{s_1}}g_1^{k_1}$ and $h^{p^{s_1}}g_1^{p^{s_2}-k_1}$ are both minimal zero-sum sequences. It follows by $\big( h^{p^{s_1}}g_1^{p^{s_2}-k_1} \big) \big((-h)^{p^{s_1}}g_1^{k_1} \big)= \big(h(-h) \big)^{p^{s_1}}g_1^{p^{s_2}}$ that $d$ divides $p^{s_1}-1$. Since $d$ divides $p^{s_2}-2$ by \eqref{observation-1}, it follows by the fact that  $s_1$ divides $s_2$ that $d=1$.

  If $t\ge 2$ and $p^{s_1}\neq \frac{\ord(h)}{2}$, then $d=1$ by Lemma \ref{3.1}.

   If $t\ge 2$ and $\ord(h)=2p^{s_1}$, then $p=2$. Since $h^{2^{s_1}}g_1^{k_1}\ldots g_t^{k_t}$ is a minimal zero-sum sequence, we obtain that $g_1^{2k_1}\ldots g_t^{2k_t}$ is a zero-sum sequence, whence the independence of $(g_1,\ldots, g_t)$ implies that $k_i=2^{s_2-1}$ for all $i\in [1,t]$. Since $(-h)^{2^{s_1}}g_1^{k_1}\ldots g_t^{k_t}$ is a minimal zero-sum sequence and
\[
\big( h^{2^{s_1}}g_1^{k_1}\ldots g_t^{k_t} \big)^2 =h^{2^{s_1+1}}\ g_1^{2^{s_2}}\ldots g_t^{2^{s_2}}
\text{ and } \big(h^{2^{s_1}}g_1^{k_1}\ldots g_t^{k_t} \big)\ \big( (-h)^{2^{s_1}}g_1^{k_1}\ldots g_t^{k_t} \big)= \big(h(-h) \big)^{2^{s_1}}\ g_1^{2^{s_2}}\ldots g_t^{2^{s_2}}\,,
\]
we obtain that $d$ divides $(t+1-2)-(2^{s_1}+t-2)=1-2^{s_1}$. Since $d$ divides $2^{s_2}-2$ by \eqref{observation-1}, it follows by the fact that  $s_1$ divides $s_2$ that  $d=1$.

\medskip
\noindent
CASE 3:  $G$ is the sum of two elementary $p$-groups, say $G = C_p^r \oplus C_q^s$, where $p, q$ are distinct primes and $r, s \in \N$.

We set $U = U_pU_q U_{pq}$, where $U_p \in \mathcal F (G)$ consists of elements of order $p$, $U_q \in \mathcal F (G)$ consists of elements of order $q$, and $U_{pq} \in \mathcal F (G)$ consists of elements of order $pq$. Since
$U$ is a minimal zero-sum sequence  and $0 = \sigma (U) = \sigma (U_p) + \sigma (U_q) + \sigma (U_{pq})$, it follows that $U_{pq}$ cannot be the empty sequence. Thus, $\supp (U)$ contains an element of order $pq = \exp (G)$ and hence by \eqref{observation-1}, we have
\begin{equation}\label{equation2}
d \text{ divides }pq-2\,.
\end{equation}

\medskip
\noindent
CASE 3.(i): $\gcd (pq-2, \mathsf D (G)-2) = 1$.

Then $d=1$   by  \eqref{observation-2} and \eqref{equation2}.

\medskip
\noindent
CASE 3.(ii): $\gcd(pq-2, p+q-3)=1$.

We first note that $\gcd(pq-2, p+q-3)=1$ implies that $p,q$ are both odd, $\gcd(pq-2, p-2)=1$,  $\gcd(pq-2, q-2)=1$, and
 \begin{equation} \label{equation1}
\gcd(pq-2, p-1)=1\,.
 \end{equation}

	 If there exists an element $h\in \supp (U)$ such that $\ord(h)\neq pq$, then $d$ divides $p-2$ or $q-2$ by \eqref{observation-1}.  In each case we infer that  $d=1$ by \eqref{equation2}.
	Now, we  assume that every element of $\supp (U)$ has order $pq$.

	Let $G_1$ and $G_2$ be subgroups of $G$ such that $G=G_1\oplus G_2$,  $G_1 \cong C_p^{r} $, and $G_2\cong C_{q}^{s}$. Since $G$ is not cyclic, we have $r\ge 2$ or $s\ge 2$. By symmetry, we may assume that  $r\ge s$.  Every element $g$ of $A$ can be written uniquely as $g=u_g+v_g$, where $u_g\in G_1$ and $v_g\in G_2$. Hence $\langle u_g\colon g\in A\rangle=G_1$ and $\{u_g\colon  g\in A\}$ contains a basis of $G_1$. We choose $g_1, \ldots, g_{r}\in A$ such that $(u_1=u_{g_1}, \ldots, u_{r}=u_{g_{r}})$ is a basis of $G_1$. We distinguish two cases.
	
	First, suppose that $(v_1=v_{g_1},\ldots, v_r=v_{g_r}) \in G_2^r$ is  independent. Since $r\ge s = \mathsf r(G_2)=s$, we infer that $r=s$.
	Therefore, $G\cong C_{pq}^r$ and $(g_1,\ldots, g_r)$ is a basis of $G$. The assertion follows by Lemma \ref{3.1}.

	Now, suppose that $(v_1=v_{g_1},\ldots, v_r=v_{g_r})$ is  not independent. Since $\langle v_1, \ldots, v_r\rangle$ is a $q$-group, there exists $I\subset [1,r]$ such that $(v_i)_{i\in I}$ is a basis of $\langle v_1, \ldots, v_r\rangle$. After renumbering if necessary, we may assume that $I=[1,y]$, where $y\in [1,r-1]$. Then $(g_1, \ldots, g_y)$ is independent and $p$ is the minimal integer such that $pg_r\in \langle g_1,\ldots, g_y\rangle$. If there exists $i\in [1,y]$ such that $pg_r\in \langle g_i\rangle$, then there exists $k\in [1, pq-1]$ such that $g_r^pg_i^k$ and $(-g_r)^pg_i^{pq-k}$ are atoms, whence it follows by $(g_r^pg_i^k)\ ((-g_r)^pg_i^{pq-k})=(g_r(-g_r))^p\ g_i^{pq}$ that $d$ divides $p-1$. The assertion follows by \eqref{equation1}. Otherwise there exist $J\subset [1,y]$ with $|J|\ge 2$ and $k_j\in [1, pq-1]$ for every $j\in J$ such that $pg_r=\sum_{j\in J}k_jg_j$.
	Now the assertion follows by Lemma \ref{3.1}.

\medskip
\noindent
CASE 3.(iii):  $q=2$ and $p-1$ is a power of $2$.

If there is an element $g \in \supp(U)$ such that $\ord (g) = p$, then $d$ divides $\gcd(p-2, 2p-2)=1$ by \eqref{observation-1} and \eqref{equation2}, whence $d=1$.
Now, we suppose that
$\supp(U)$ contains no element of order $p$.  Since $d$ divides $2(p-1)$ and $p-1$ is a power of $2$, it suffices to prove that $d$ divides an odd number.

Let $G_1$ and $G_2$ be subgroups of $G$ such that $G=G_1\oplus G_2$,  $G_1 \cong C_p^r $, and $G_2\cong C_2^s$. Then every element $g$ of $\supp(U)$ can be written uniquely as $g=u_g+v_g$, where $u_g\in G_1$ and $v_g\in G_2$. Hence $\langle u_g\colon g\in \supp(U)\rangle=G_1$ and $\{u_g: g\in \supp(U)\}$ contains a basis of $G_1$. We choose $g_1, \ldots, g_r\in \supp(U)$ such that $(u_1=u_{g_1}, \ldots, u_r=u_{g_r})$ is a basis of $G_1$. Since $\supp(U)$ has no element of order $p$, it follows that $\ord(g_i)=2p$ for all $i\in [1,r]$.

We set
\[
T_0=\prod_{g\in \supp(U)\text{ with }\ord(g)=2}g \quad \text{ and } \quad T_i=\prod_{g\in \langle g_i\rangle \text{ with }\ord(g)=2p}g^{\mathsf v_{g}(U)} \ \text{ for all } \ i\in [1,r] \,.
\]
Assume to the contrary that $U=T_0T_1\ldots T_r$. Then $\sigma(T_i)$ has order $2$ for every $i\in [1,r]$. If $|T_i|\ge p+1=\mathsf D(C_p)+1$, then there exists a subsequence $T_i'$ of $T_i$ such that $1\le |T_i'|\le p$ and $\sigma(T_i')$ has order $2$, which implies that $T_i'$ or $T_iT_i'^{-1}$ is a nonempty zero-sum sequence, a contradiction to the minimality of $U$.	
Thus $|T_i|\le p$ for every $i\in [1,r]$. Since $T_0$ is zero-sum free, we infer that $|T_0|\le r+s$.
Hence $|U|\le pr+(r+s)=(p+1)r+s<\mathsf D^*(G)\le \mathsf D(G)$, a contradiction.

Therefore, $U \ne T_0T_1\ldots T_r$, whence there is an element $h\in \supp(U)\setminus \{g_1,\ldots, g_r\}$ such that $\ord(h)=2p$ and $h\not\in \langle g_i\rangle$ for any $i\in [1,r]$.
Let $I\subset [1,r]$ be a minimal subset such that $u_h\in \langle u_i\colon i\in I\rangle$.
After renumbering if necessary, we may assume that $I=[1,x]$, where $x\in [1,r]$.

Suppose $x=1$. Note that $h\not\in \langle g_1\rangle$ and $2h\in \langle g_1\rangle$. Then there exists $k\in [1, 2p-1]$ such that $h^2g_1^k$ is a minimal zero-sum sequence,  and then the same is true for $(-h)^2g_1^{2p-k}$. Since  $(h^2g_1^k) ((-h)^2g_1^{2p-k})=(h(-h))^2 g_1^{2p}$, we obtain $d=1$.

Suppose $x\ge 2$ and $v_h\in \langle v_1, \ldots, v_x \rangle $, where $v_i=v_{g_i}$ for all $i\in [1,x]$. For all $i \in [1,x]$, let $k_i\in [1,p-1]$ be such that  $u_h=k_1u_1+\ldots + k_xu_x$. The elements $v_h, v_1, \ldots, v_x$ have order $2$.	After renumbering if necessary, we may  assume that $v_hv_1\ldots v_y$ is a minimal zero-sum sequence over $G_2$ for some $y \in [1, x]$. Therefore, the tuple $(v_1,\ldots, v_y)$ is independent, whence  $(g_1,\ldots, g_y)$ is independent. If $y=x$, then $h\in \langle g_1,\ldots, g_x\rangle$ and
the assertion follows by Lemma \ref{3.1}. Suppose $y<x$. Then $h\not\in \langle g_1, \ldots, g_y\rangle$ and $p$ is the minimal integer such that $ph\in \langle g_1, \ldots, g_y\rangle$.
If $y$ is even,  then $h^pg_1^p\ldots g_y^p$ is a minimal zero-sum sequence of odd length, whence $d$ divides an odd number  by \eqref{observation-2}. Suppose $y$ is odd. We replace $g_i$ by $-g_i$, $u_i$ by $-u_i$, and $k_i$ by $p-k_i$, if necessary, in order to make $k_i$ to be odd for all $i\in [1,y]$ and $k_j$ to be even for all $j\in [y+1,x]$. Then
\[
W=(-h)g_1^{k_1}\ldots g_x^{k_x} \quad \text{ and } \quad V=h^{p-1}(-g_1)^{p-k_1}\ldots (-g_y)^{p-k_y}g_{y+1}^{k_{y+1}}\ldots g_x^{k_x}
\]
are  minimal zero-sum sequences over $\supp(U(-U))$,  $T=g_{y+1}^{pk_{y+1}}\ldots g_x^{pk_x}$  is a zero-sum sequence,
\begin{align*}
W^p=T \ ((-h)g_1\ldots g_y)^p\ \prod_{i=1}^y (g_i^{2p})^{\frac{k_i-1}{2}} \quad \text{ and } \quad V^p=T \ ((-h)^{2p})^{\frac{p-1}{2} }\prod_{i=1}^y((-g_i)^{2p})^{\frac{p-k_i}{2}}\,.
\end{align*}
If $\ell_0\in \mathsf L(T)$, then $d$ divides
\[
\left(\ell_0+\frac{p-1}{2}+\sum_{i\in [1,y]}\frac{p-k_i}{2}-p\right)-\left(\ell_0+1+\sum_{i\in [1,y]}\frac{k_i-1}{2}-p\right)    =\frac{p-1}{2}+\frac{p+1}{2}y-\sum_{i=1}^yk_i-1
\]
Since $y$ is odd and $k_i$ are odd for all $i\in [1,y]$, it follows that $\frac{p-1}{2}+\frac{p+1}{2}y-\sum_{i=1}^yk_i-1\equiv 1\pmod 2$, whence $d$ divides an odd number.

Suppose $x\ge 2$ and $v_h\not\in \langle v_1, \ldots, v_x \rangle $. Then $h\not\in \langle g_1,\ldots, g_x\rangle$ and $2h\in \langle g_1,\ldots, g_x\rangle$. Let $2u_h=k_1u_1+\ldots +k_xu_k$, where $k_i\in [1,p-1]$. We replace $g_i$ by $-g_i$, $u_i$ by $-u_i$, and $k_i$ by $p-k_i$, if necessary,  in order to make $k_i$ to be  even for all $i\in [1,x]$. If $(g_1, \ldots, g_x)$ is independent, then the assertion follows by Lemma \ref{3.1}.
Otherwise the tuple $(v_1=v_{g_1}, \ldots, v_x=v_{g_x})$ is not independent.  After renumbering if necessary, we may assume that $v_1\ldots v_y$ is a minimal zero-sum sequence over $G_2$, where $y\in [2, x]$, whence $g_1^p\ldots g_y^p$ is a minimal zero-sum sequence. If $y$ is odd, then $d$ divides an odd number by \eqref{observation-2}. Suppose $y$ is even. Then $W=(-h)^2g_1^{k_1}\ldots g_x^{k_x}$  and  $V=(-h)^2(-g_1)^{p-k_1}\ldots (-g_y)^{p-k_y}g_{y+1}^{k_{y+1}}\ldots g_x^{k_x}$ are  minimal zero-sum sequences, $T=(-h)^{2p}g_{y+1}^{pk_{y+1}}\ldots g_x^{pk_x}$  is a zero-sum sequence,
\begin{align*}
W^p=T \  \prod_{i=1}^y (g_i^{2p})^{\frac{k_i}{2}} \quad \text{ and } \quad V^p=T \ ((-g_1)\ldots (-g_y))^p\  \prod_{i=1}^y((-g_i)^{2p})^{\frac{p-k_i-1}{2}}\,.
\end{align*}
If $\ell_0\in \mathsf L(T)$, then $d$ divides
\[
\left(\ell_0+1+\sum_{i\in [1,y]}\frac{p-k_i-1}{2}-p\right)-\left(\ell_0+\sum_{i\in [1,y]}\frac{k_i}{2}-p\right)    =1+\frac{p-1}{2}y-\sum_{i=1}^yk_i\,.
\]
Since $y$ is even and $k_i$ are even for all $i\in [1,y]$, it follows that $1+\frac{p-1}{2}y-\sum_{i=1}^yk_i\equiv 1\pmod 2$, whence $d$ divides an odd number.

\medskip
\noindent
CASE 3.(iv):  $q=2$ and $r=1$.

Let $h\in \supp(U)$ such that $\ord(h)=2p$.  Since $\langle h\rangle $ is a direct summand of $G$, there is a subgroup $G_1$ of $G$ with $G\cong G_1\oplus \langle h\rangle$, whence $G_1\cong C_2^s$. Every element $g$ of $\supp(U)$ can be written uniquely as $g=u_g+v_g$, where $u_g\in G_1$ and $v_g\in \langle h \rangle$. Hence $\langle u_g\colon g\in \supp(U)\setminus \{h\}\rangle=G_1$ and $\{u_g\colon  g\in \supp(U)\setminus \{h\}\}$ contains a basis of $G_1$. We choose $g_1, \ldots, g_s\in \supp(U)\setminus\{h\}$ such that $(u_1=u_{g_1}, \ldots, u_{s}=u_{g_{s}})$ is a basis of $G_1$. We distinguish two cases.

Suppose $\ord(g_i)=2$ for every $i\in [1, s]$.  Then the tuple $(g_1, \ldots, g_{s})$ is independent, whence the tuple $(g_1, \ldots, g_{s}, h)$ forms a basis of $G$. Then the assertion follows by Lemma \ref{3.1}.

Suppose there exists $i\in [1,s]$ such that $\ord(g_i)=2p$. Then $2$ is the minimal integer such that $2g_i\in \langle h\rangle$, whence there exists $k\in [1, 2p-1]$ such that both $g_i^2h^k$ and $(-g_i)^2h^{2p-k}$ are minimal zero-sum sequences. Then $d=1$ because
\[
\big(g_i^2h^k \big)\ \big((-g_i)^2h^{2p-k} \big) = \big(g_i(-g_i) \big)^2 \ h^{2p}\,.
\]

\medskip
\noindent
CASE 4: $G$ is a  group with $\exp (G) \in [3,11]\setminus \{8\}$.

If $\exp (G)$ is  prime, then the claim follows from CASE 2 (with $s_1=s_2=1$). The case, when $\exp (G) \in \{4, 9\}$,  is also handled in CASE 2,  and the case $\exp (G) \in \{6, 10\}$ is handled in CASE 3.(iii).
\end{proof}

\providecommand{\bysame}{\leavevmode\hbox to3em{\hrulefill}\thinspace}
\providecommand{\MR}{\relax\ifhmode\unskip\space\fi MR }
\providecommand{\MRhref}[2]{%
  \href{http://www.ams.org/mathscinet-getitem?mr=#1}{#2}
}
\providecommand{\href}[2]{#2}

\end{document}